\documentclass[11pt,a4paper]{scrartcl}

\usepackage[utf8]{inputenc}
\usepackage[T1]{fontenc}
\usepackage{amsmath}
\usepackage{amssymb}
\usepackage{amsthm}
\usepackage{mathtools}
\usepackage{thmtools}
\usepackage{enumitem}
\setlist[enumerate]{label=(\alph*),left=0pt,labelsep=1em}
\usepackage{multicol}
\usepackage{array}
\usepackage{xcolor}

\setlist[description]{labelindent=\parindent}
\usepackage[left=2cm,right=2cm,top=2cm,bottom=2.5cm]{geometry}

\KOMAoptions{parskip=never}

\AtBeginDocument{
	\setlength{\abovedisplayskip}{5pt}
	\setlength{\belowdisplayskip}{5pt}
	\setlength{\abovedisplayshortskip}{2pt}
	\setlength{\belowdisplayshortskip}{2pt}
	\setlength{\jot}{2pt}
}

\usepackage{hyperref}
\hypersetup{
	colorlinks,
	linkcolor={magenta!70!black},
	citecolor={green!60!black},
	urlcolor={blue!60!black}
}
\usepackage[capitalise]{cleveref}

\addtokomafont{disposition}{\rmfamily}  %same font everywhere

\usepackage{mleftright} 
\mleftright % redefine \left as \mleft and \right as \mright.
\usepackage[
backend=biber,
style=alphabetic,
sorting=nyt,
isbn=false
]{biblatex}
\renewbibmacro*{doi+eprint+url}{%if DOI is available, no URL
	\iftoggle{bbx:doi}
	{\printfield{doi}}
	{}%
	\newunit\newblock
	\ifboolexpr{togl {bbx:eprint} and test {\iffieldundef{doi}}}
	{\usebibmacro{eprint}}
	{}%
	\newunit\newblock
	\ifboolexpr{togl {bbx:url} and test {\iffieldundef{doi}}  and test {\iffieldundef{eprint}}}
	{\usebibmacro{url+urldate}}
	{}}
\usepackage{tikz}
\usetikzlibrary{calc}
\usepackage{tikz-cd}

\newcommand{\setc}{\mathsf{Set}}
\newcommand{\finset}{\mathsf{FinSet}}

\newcommand{\meas}{\mathsf{Meas}}
\newcommand{\borelmeas}{\mathsf{BorelMeas}}

\newcommand{\N}{\mathbb{N}}

\newcommand{\Q}{\mathbb{Q}}
\newcommand{\R}{\mathbb{R}}

\newcommand{\id}{\operatorname{id}}

\newcommand{\dx}{\mathrm{d}}

\newcommand{\power}{\mathcal{P}}
\newcommand{\finiteset}[1]{[#1]}	% finite set {1, .., n} or perhaps {0, .., n-1}, depending on which convention is more convenient
\newcommand{\two}{\finiteset{2}}		% two-element set
\newcommand{\three}{\finiteset{3}}		% three-element set

\newcommand{\dist}{\mathcal{D}}
\newcommand{\distfin}{\mathcal{D}_{\mathrm{fin}}}
\newcommand{\giry}{\mathcal{P}}
\newcommand{\Sym}{\operatorname{Sym}}

\newtheorem{theo}{Theorem}[section]
\newtheorem{defi}[theo]{Definition}
\newtheorem{prop}[theo]{Proposition}
\newtheorem{lem}[theo]{Lemma}

\theoremstyle{definition}
\newtheorem{rem}[theo]{Remark}
\newtheorem*{rem*}{Remark}

\Crefname{theo}{Theorem}{Theorems}
\Crefname{defi}{Definition}{Definitions}
\Crefname{prop}{Proposition}{Propositions}
\Crefname{lem}{Lemma}{Lemmas}
\Crefname{cor}{Corollary}{Corollaries}
\Crefname{conj}{Conjecture}{Conjectures}
\Crefname{rem}{Remark}{Remarks}
\Crefname{ex}{Example}{Examples}
\Crefname{equation}{}{}		% write '(2.3)' instead of 'Equation (2.3)'
\Crefname{figure}{Figure}{Figures}

\title{Naturality characterizes product measures and relative product measures}
\author{Victor Bloch\thanks{Supported by a doctoral scholarship of the University of Innsbruck.}, Tobias Fritz\\[0.5em]
\small Department of Mathematics, University of Innsbruck}

\date{\today}

\begin{document}

\maketitle

\begin{abstract}
	We show that the product measure is the only \emph{natural} way to assign to each pair of probability measures on measurable spaces a probability measure on their product.
	Here, naturality is meant in the sense of category theory and amounts to the condition that the assignment commutes with pushforward along measurable maps.
	In the standard Borel setting, we also prove an analogous result for relative product measures over a fixed base probability space.
\end{abstract}

\tableofcontents

\section{Introduction}

Categorical approaches to probability theory and statistics trace back to Lawvere's seminal 1962 manuscript on the category of probabilistic mappings~\cite{Lawvere1962} and Giry's 1982 paper on the monad of probability measures~\cite{Giry1982}.
But especially in recent years, there has been a surge of interest with surprisingly strong results.
To name a few, these include fully categorical proofs of de Finetti's theorem~\cite{Fritz2018}, the Aldous--Hoover theorem~\cite{Chen2025}, the $d$-separation criterion~\cite{FritzKlingler2023}
and categorical accounts of ergodic decomposition~\cite{Moss2023}, the law of large numbers~\cite{Fritz2026} and the Metropolis--Hastings algorithm~\cite{Cornish2026}.
A recurring theme is that probabilistic structures are abstractly encoded in categorical terms, and this leads to a higher-level understanding of their properties and interrelations,
and sometimes also to intriguing new questions.

In the present work, we raise and answer a question of this kind: what singles out the product-measure construction---and its generalization to relative products over a fixed base probability space---among all possible ways of combining two probability measures into a joint distribution?
The simple answer is that the categorical notion of \emph{naturality} completely characterizes it.
In measure-theoretic terms, naturality means that the measure associated to a pair of pushforward measures is the pushforward of the measure associated to the original pair.
We show in \cref{theo:onenaturaltransformation} that taking product measures is the only way to achieve this.
This is arguably not very surprising, and indeed also closely related to a result in copula theory that we discuss below.

Product measures are a fundamental concept in probability theory associated with stochastic independence.
The more involved concept of \emph{conditional independence} is likewise related to the construction of \emph{relative product measures} over a fixed base probability space,
where the measure on this base space may be thought of as formalizing shared information between the two spaces above it.
As is the case in categorical approaches to topology and geometry as well, such relative constructions are particularly amenable to a categorical treatment via slice categories.
In fact, we will extend \cref{theo:onenaturaltransformation} to a relative version in \cref{theo:onenaturaltransformation_relative}.
It characterizes the relative product measure as the uniquely natural way of combining two probability measures over a fixed base probability space into a joint distribution.
As the formation of relative product measures itself, this characterization requires stronger assumptions on the measurable spaces involved, and we only consider the case of standard Borel spaces.

\subsection*{Related work}

The present work is closely related to \emph{copula theory},
which studies systematic ways of combining marginal distributions into joint distributions~\cite{Nelsen2006,Durante2007,Durante2009}.
If one focuses on atomless distributions on standard Borel spaces,
then one can restrict to the Lebesgue measure on $[0,1]$ as marginals without loss of generality.
Hence an $n$-variate copula is defined as a probability measure on $[0,1]^n$ with uniform marginals.
For example, the \emph{independence copula} is the product measure on $[0,1]^n$.

Close to our results is the characterization of the independence copula as the unique copula that is invariant under all measure-preserving transformations of its coordinates due to Durante, Sarkoci and Sempi~\cite[Theorem~3.1]{Durante2009}.
In fact, this can be viewed as a version of our \cref{theo:onenaturaltransformation},
namely the restriction to uncountable standard Borel spaces and considering the functor which maps every such space to the non-atomic probability measures on it.
The proof amounts to noting that the Lebesgue measure on $[0,1]^2$ is the only measure that assigns the same number to every smaller square in an $n \times n$ partition.

Some of the arguments in this manuscript can be formulated and proved elegantly in terms of the abstract framework of Markov categories~\cite{Fritz2020}.
However, we prefer to spell out the measure-theoretic details here in order to avoid the additional overhead of recalling the Markov-categorical framework,
given that the main results are about the classical functors of probability measures only.

\section{Preliminaries}

\subsection{Basics}

In this subsection, we declare some notation and recall some standard material on functors of probability measures.
For natural transformations, we usually omit the subscripts which index the components, as these can be inferred from context.
For $n \in \N$, we write $\finiteset{n} \coloneqq \{1,\ldots,n\}$.

Denote by $\setc$ the category of sets and functions. Let $\dist: \setc \rightarrow \setc$ be the \emph{distribution functor}. It sends a set $X$ to the set $\dist X$ of discrete probability measures on $X$. A discrete probability measure is a finitely supported function $p: X \rightarrow [0,1]$ s.t. $\sum_{x \in X} p(x) = 1$, 
or equivalently a probability measure on the power set $\power(X)$ as $\sigma$-algebra.
On functions $f : X \to Y$, $\dist$ acts by taking the pushforward, $(Df)(p) \coloneq f_\ast p$, where $(f_\ast p)(S) = p(f^{-1}(S))$ for all $S \subseteq Y$.
Functoriality of $\dist$ is straightforward to check.
We also write $\distfin : \finset \rightarrow \setc$ for the restriction of $\dist$ to $\finset$, the full subcategory of $\setc$ consisting of finite sets.

The most general framework for measure-theoretic probability is provided by $\meas$, the category of measurable spaces and measurable maps. A probability measure on a measurable space $(X,\Sigma_X)$ is a function $p: \Sigma_X \rightarrow [0,1]$ that satisfies $\sigma$-additivity and the normalization equation $p(X) = 1$.
For a measurable map $f:(X,\Sigma_X) \rightarrow (Y,\Sigma_Y)$, the pushforward $f_\ast p$ is again defined by $(f_\ast p)(S) \coloneq p(f^{-1}(S))$ for $S \in \Sigma_Y$. The analog of $\dist$ now is the \emph{Giry functor} $\giry: \meas \rightarrow \setc$~\cite{Giry1982}.
This functor sends a measurable space $(X,\Sigma_X)$ to the set of all probability measures on it.\footnote{As for the Giry monad~\cite{Giry1982}, we could also endow every $\giry X$ with the $\sigma$-algebra generated by the evaluation maps and therefore make $\giry$ land in $\meas$ itself,
but we will not need this here.}
On measurable maps, $\giry$ acts as the pushforward, that is, for $f: (X,\Sigma_X) \rightarrow (Y,\Sigma_Y)$ and $p$ a probability measure on $(X,\Sigma_X)$ we have $(\giry f (p) \coloneq f_\ast p$.

A measurable space $(X,\Sigma_X)$ is a \emph{standard Borel space} if there is a separable and completely metrizable topology on $X$ such that $\Sigma_X$ is the $\sigma$-algebra generated by its open sets.
Standard Borel spaces play a fundamental role in probability theory, as they are both very well-behaved and sufficiently general to include most measurable spaces of interest.
We denote the full subcategory of $\meas$ on standard Borel spaces by $\borelmeas$.
The Giry functor restricts to an endofunctor of $\borelmeas$, and we also write $\giry: \borelmeas \rightarrow \setc$ for this restriction.

\subsection{Product measures}

Given two measurable spaces $(X,\Sigma_X)$ and $(Y,\Sigma_Y)$, we endow $X \times Y$ with the \emph{product $\sigma$-algebra} $\Sigma_X \otimes \Sigma_Y$,
which is generated by the rectangles $S \times T$ with $S \in \Sigma_X$ and $T \in \Sigma_Y$.
This is the categorical product both on $\meas$ and on $\borelmeas$ with respect to the usual projection maps
\[
	  \pi_X \: : \: X \times Y \longrightarrow X, \qquad
	  \pi_Y \: : \: X \times Y \longrightarrow Y.
\]
It clearly restricts to the usual categorical product on $\setc$ and $\finset$.

For probability measures~$p$ on $(X,\Sigma_X)$ and $q$ on $(Y,\Sigma_Y)$, a \emph{joint distribution} is a probability measure $r$ on $(X \times Y, \Sigma_X \otimes \Sigma_Y)$ such that $p$ and $q$ are the marginals of $r$, that is, 
\[
	(\pi_X)_\ast r = p, \qquad (\pi_Y)_\ast r = q.
\]
There are typically many such joint distributions $r$, which differ in how they correlate the two marginals $p$ and $q$.
The canonical choice, corresponding to no correlation, is the product measure:

\begin{prop}[Product measure] \label{defi:productmeasure}
  Let $(X,\Sigma_X)$ and $(Y,\Sigma_Y)$ be measurable spaces and $p \in \giry(X)$ and $q \in \giry(Y)$.
  Then there is a unique measure $p \otimes q$ on $(X \times Y, \Sigma_X \otimes \Sigma_Y)$ such that
  \begin{equation}
    \label{eq:productmeasure}
    (p \otimes q) (S \times T) = p(S) \cdot q(T)
  \end{equation}
  for all $S \in \Sigma_X$ and $T \in \Sigma_Y$.
\end{prop}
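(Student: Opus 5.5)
Existence comes from iterated integration and uniqueness from Dynkin's $\pi$-$\lambda$ theorem, since the rectangles form a $\pi$-system generating $\Sigma_X \otimes \Sigma_Y$.

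\textbf{Uniqueness.} Suppose $r$ and $r'$ are two measures satisfying \eqref{eq:productmeasure}. Both are probability measures, since $r(X \times Y) = p(X)\,q(Y) = 1$. The collection $\mathcal{R}$ of measurable rectangles $S \times T$ is closed under finite intersections, because $(S\times T)\cap(S'\times T') = (S\cap S')\times(T\cap T')$. It also generates $\Sigma_X\otimes\Sigma_Y$ by definition. The class $\{E : r(E) = r'(E)\}$ is a Dynkin system: it contains $X\times Y$, is closed under proper differences because the measures are finite, and is closed under increasing unions by continuity from below. It contains $\mathcal{R}$, so by the $\pi$-$\lambda$ theorem it contains $\sigma(\mathcal{R}) = \Sigma_X \otimes \Sigma_Y$. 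Hence $r = r'$.

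\textbf{Existence.} For $E \in \Sigma_X\otimes\Sigma_Y$ and $x\in X$, write $E_x := \{y : (x,y)\in E\}$ for the section of $E$ at $x$. I will define
\[
  (p\otimes q)(E) := \int_X q(E_x)\,\dx p(x).
\]
For this to make sense, two measurability facts are needed.
\begin{enumerate}
  \item Each section $E_x$ lies in $\Sigma_Y$. The sets $E$ with this property form a $\sigma$-algebra, since sections commute with complements and countable unions, and this $\sigma$-algebra contains all rectangles.
  \item The function $x\mapsto q(E_x)$ is $\Sigma_X$-measurable. The class of $E$ with this property contains $\mathcal{R}$, because $q((S\times T)_x) = 1_S(x)\,q(T)$. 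It is a Dynkin system: for proper differences we have $q((E\setminus F)_x) = q(E_x) - q(F_x)$ using finiteness of $q$, and increasing unions are handled by monotone limits of measurable functions. The $\pi$-$\lambda$ theorem then gives measurability for all of $\Sigma_X\otimes\Sigma_Y$.
\end{enumerate}

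Given these, $\sigma$-additivity of $p\otimes q$ follows from $\sigma$-additivity of $q$ on sections, since sections of disjoint sets are disjoint, combined with the monotone convergence theorem for the integral over $p$. On rectangles the formula gives $\int_X 1_S(x)\,q(T)\,\dx p(x) = p(S)\,q(T)$, which is \eqref{eq:productmeasure}.

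The only step requiring care is the measurability of $x\mapsto q(E_x)$, and it is handled by the Dynkin argument in (b). Everything there is finite, so no $\sigma$-finiteness subtleties arise.
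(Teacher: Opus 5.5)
Your proof is correct: Dynkin's $\pi$-$\lambda$ theorem on the rectangles gives uniqueness, with your remark that any such measure has total mass $p(X)\,q(Y)=1$ and is therefore finite, and the iterated integral $\int_X q(E_x)\,\dx p(x)$ gives existence. The paper states this proposition as standard background without proof, and your argument is the standard textbook one. Its uniqueness half is exactly what the paper relies on later, when it checks equality of measures only on rectangles (or on a generating $\pi$-system).
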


The formation of product measures defines maps
\begin{align*}
	\nabla \: : \: \giry X \times \giry Y & \longrightarrow \giry(X \times Y) \\
	(p,q) & \longmapsto p \otimes q
\end{align*}
which are natural in $X$ and $Y$, meaning that for all measurable $f : X \to X'$ and $g : Y \to Y'$, we have
\[
	(f \times g)_\ast (p \otimes q) = f_\ast p \otimes g_\ast q.
\]
To see this, we evaluate on any rectangle $S' \times T'$ with $S' \in \Sigma_{X'}$ and $T' \in \Sigma_{Y'}$, where we get
\begin{align*}
	(f_\ast p \otimes g_* q)(S' \times T') & = (f_\ast p)(S') \cdot (g_\ast q)(T') \\[2pt]
						& = p(f^{-1}(S')) \cdot q(g^{-1}(T')) \\[2pt]
						& = (p \otimes q)(f^{-1}(S') \times g^{-1}(T')) \\[2pt]
						& = ((f \times g)_\ast (p \otimes q))(S' \times T'),
\end{align*}
and this implies the desired equality by the uniqueness part of \cref{defi:productmeasure}.
Hence the formation of product measures can be seen as a natural transformation
\begin{equation}
	\label{eq:productnaturality_giry}
	\nabla \: : \: \giry\_ \times \giry\_ \Longrightarrow \giry(\_ \times \_),
\end{equation}
where both sides are functors $\meas \times \meas \longrightarrow \setc$.\footnote{In fact, this natural transformation makes $\giry$ into a lax symmetric monoidal functor~\cite{Fritz2018}.}
All of this applies likewise to the distribution functor $\dist$ on $\setc$: the formation of product distributions is a natural transformation
\begin{equation}
	\label{eq:productnaturality_dist}
	\nabla \: : \: \dist\_ \times \dist\_ \Longrightarrow \dist(\_ \times \_)
\end{equation}
between functors $\setc \times \setc \longrightarrow \setc$, and likewise for $\distfin$ on $\finset$.

\subsection{Relative product measures}

Product measures are about probabilistic independence: if one views $X$ and $Y$ as random variables on the probability space $(X \times Y, p \otimes q)$ via the projection maps,
then $X$ and $Y$ are independent, and the product measure is the unique joint distribution with this property.
But even more important is the concept of \emph{conditional independence},
and the relevant notion of product measure is the \emph{relative product measure}~\cite{Dawid1999},
which we now review.
These relative products have been studied under various names in different areas,
e.g.~in the context of graphical models~\cite{Dawid1999} and ergodic theory~\cite[Definition~6.15]{Einsiedler2011}.
Categorical treatments include~\cite{Flori2016,Simpson2018,Perrone2025} and~\cite[Definition~12.8]{Fritz2020}.\footnote{These works partly restrict to the case where $f$ and $g$ are themselves product projections, but modulo adding outcomes of measure zero, this is no restriction.}

Similarly to how product measures live on product spaces, relative product measures live on pullbacks.
So let
\begin{equation}
	\label{eq:pullbackdiagram}
	\begin{tikzcd}
		X \times_B Y \arrow[r, "\pi_X"] \arrow[d, "\pi_Y"'] & X \arrow[d, "\rho_X"] \\
		Y \arrow[r, "\rho_Y"'] & B
	\end{tikzcd}
\end{equation}
be a pullback diagram in $\meas$.
Suppose that $p \in \giry(X)$ and $q \in \giry(Y)$ are probability measures such that $(\rho_X)_\ast p = (\rho_Y)_\ast q$.
Then the question is: is there a canonical probability measure $r \in \giry(X \times_B Y)$ such that $(\pi_X)_\ast r = p$ and $(\pi_Y)_\ast r = q$?

If $X$, $Y$ and $B$ are standard Borel,
and if we view $X$, $Y$ and $B$ as random variables on the probability space $(X \times_B Y, r)$ via the given maps,
then there is a unique $r$ which makes $X$ and $Y$ conditionally independent given $B$~\cite[Proposition~12.9]{Fritz2020}.
This is the relative product of $p$ and $q$ over $B$, and we denote it by
\[
	p \otimes_\omega q \:\in\: \giry(X \times_B Y),
\]
where $\omega \coloneq (\rho_X)_\ast p = (\rho_Y)_\ast q$ is the common pushforward on $B$.
Writing down $p \otimes_\omega q$ explicitly requires regular conditional probabilities.
These are most easily understood as Markov kernels, which we now recall.

\begin{defi}
	For measurable spaces $(X, \Sigma_X)$ and $(Y, \Sigma_Y)$, a \emph{Markov kernel} from $X$ to $Y$ is a map
	\begin{align*}
		\kappa \: : \: \Sigma_Y \times X & \longrightarrow [0,1] \\
		(S, x) & \longmapsto \kappa(S|x)
	\end{align*}
	such that:
	\begin{enumerate}[label=(\roman*)]
		\item $\kappa(-|x)$ is a probability measure on $(Y, \Sigma_Y)$ for all $x \in X$.
		\item $\kappa(S|\cdot)$ is measurable for all $S \in \Sigma_Y$.
	\end{enumerate}
\end{defi}

Equivalently, a Markov kernel can be viewed as a measurable map $X \to \giry Y$.\footnote{The equivalence of these two points of view leads to the definition of \emph{representable Markov category}~\cite[Definition~3.10]{Fritz2023}.}

\begin{defi}
	Let $\rho_X : X \to B$ be a measurable map between measurable spaces $(X,\Sigma_X)$ and $(B,\Sigma_B)$.
	Then a \emph{regular conditional probability} of a probability measure $p \in \giry X$ with respect to $\rho_X$ is a Markov kernel $p(-|\cdot) : \Sigma_X \times B \to [0,1]$ such that
  \begin{equation}
		\label{eq:disintegration}
		p(S \cap \rho_X^{-1}(T)) = \int_{b \in T} p(S|b) \  \omega(\dx b)
		\qquad
		\forall S \in \Sigma_X, T \in \Sigma_B,
	\end{equation}
	where $\omega \coloneq (\rho_X)_\ast p$.
\end{defi}

\begin{prop}\label{prop:regularconditionalprobability}
	If $X$ and $B$ are standard Borel, then such a regular conditional probability exists for every $p \in \giry X$,
	and it is unique up to $\omega$-a.e.~equality.
\end{prop}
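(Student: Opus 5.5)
Since $X$ and $B$ are standard Borel, we may reduce to the case $X = [0,1]$ (or a Borel subset of it) by the Borel isomorphism theorem. Every standard Borel space is Borel isomorphic to a Borel subset of $[0,1]$. A regular conditional probability transports along such an isomorphism: if $\varphi : X \to X'$ is a Borel isomorphism, then $p(-|\cdot)$ is a regular conditional probability of $p$ with respect to $\rho_X$ exactly when $\varphi_\ast p(-|\cdot)$ is one of $\varphi_\ast p$ with respect to $\rho_X \circ \varphi^{-1}$. Moreover, a kernel into $[0,1]$ that gives full measure to a Borel subset $X'$ restricts to a kernel into $X'$. Note that $B$ only enters through $\omega$ and the $\sigma$-algebra $\Sigma_B$, so for existence no structure on $B$ beyond measurability is really needed.

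For existence on $X \subseteq [0,1]$, I follow the classical construction via conditional distribution functions.
\begin{enumerate}
	\item For each rational $t \in [0,1]$, define a function $F_t : B \to [0,1]$ as a Radon--Nikodym derivative:
	\[
		F_t \coloneqq \frac{\dx\, (\rho_X)_\ast\bigl(p|_{[0,t] \cap X}\bigr)}{\dx \omega}.
	\]
	This makes sense because $(\rho_X)_\ast(p|_{[0,t]\cap X}) \le \omega$. By construction, $p([0,t] \cap \rho_X^{-1}(T)) = \int_T F_t \, \dx\omega$ for all $T \in \Sigma_B$.
	\item Since there are only countably many rationals, one $\omega$-null set $N$ suffices: off $N$, the map $t \mapsto F_t(b)$ is monotone on $\Q \cap [0,1]$, right-continuous along rationals, and satisfies $F_1(b) = 1$.
	\item For $b \notin N$, extend to the right-continuous function $F(t,b) \coloneqq \inf_{s > t,\, s \in \Q} F_s(b)$. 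This is a distribution function, so it determines a Borel probability measure $p(-|b)$ on $[0,1]$. For $b \in N$, set $p(-|b) \coloneqq p$.
	\item Measurability of $b \mapsto p(S|b)$ holds for the intervals $S = [0,t]$. The class of $S$ for which both measurability and the disintegration identity \cref{eq:disintegration} hold is a Dynkin system. The intervals $[0,t]$ form a $\pi$-system generating the Borel sets, so the $\pi$–$\lambda$ theorem extends both properties to all Borel $S$. The identity on intervals uses monotone convergence in $t$.
	\item Finally, $p(X^c \cap \rho_X^{-1}(B)) = 0$ forces $p(X^c|b) = 0$ for $\omega$-a.e.\ $b$, which requires $X^c$ to be Borel. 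Modify the kernel on this null set (for example, set it to $p$ there) so that each $p(-|b)$ lives on $X$.
\end{enumerate}

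For uniqueness, suppose $\kappa$ and $\kappa'$ are both regular conditional probabilities of $p$. For fixed $S$, \cref{eq:disintegration} shows that $\kappa(S|\cdot)$ and $\kappa'(S|\cdot)$ have the same integral over every $T \in \Sigma_B$. Both are $\Sigma_B$-measurable, so they agree $\omega$-a.e. Now take $S$ ranging over a countable $\pi$-system generating $\Sigma_X$, which exists because $X$ is standard Borel (countably generated). Off a single null set, $\kappa(-|b)$ and $\kappa'(-|b)$ agree on this generator, hence on all of $\Sigma_X$ by the $\pi$–$\lambda$ uniqueness theorem.

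The main obstacle is the step from countably many a.e.-defined Radon--Nikodym derivatives to a genuine probability measure for every $b$. This requires choosing a single null set and checking countable additivity of the extension, which the distribution-function route handles. An alternative I would fall back on is compactness: realize $X$ inside a compact metric space, build a positive linear functional on a countable dense subset of $C(K)$ a.e., and apply the Riesz representation theorem.
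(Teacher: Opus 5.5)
Your proof is correct. Note that the paper contains no proof of this proposition: it only cites Bogachev, where existence is obtained in greater generality through compact approximating classes on countably generated $\sigma$-algebras, and uniqueness follows from countable generation of $\Sigma_X$.

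Your uniqueness argument is exactly that standard one. For existence you take a different route, the classical textbook one:
\begin{itemize}
\item a Borel isomorphism onto a Borel subset of $[0,1]$;
\item countably many Radon--Nikodym derivatives $F_t$;
\item a single $\omega$-null set off which the $F_t$ form a conditional distribution function;
\item the Lebesgue--Stieltjes extension and the $\pi$--$\lambda$ theorem;
\item a final null-set modification to put the kernel back on $X$.
\end{itemize}

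What your route buys is a short, self-contained proof whose only nontrivial input is the Borel isomorphism theorem. That theorem is genuinely needed in your last step, so that $X^c$ is Borel in $[0,1]$. Your route also shows that $B$ can be an arbitrary measurable space for both existence and uniqueness. The standard Borel hypothesis on $B$ is only used elsewhere in the paper, e.g.\ for measurability of the diagonal in \cref{lem:relativeproductsupport}. The compact-class approach avoids the reduction to $[0,1]$ and applies beyond the standard Borel setting, at the price of more machinery; your Riesz-representation fallback is essentially a variant of it.

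Two cosmetic details. At $t=1$ the infimum over rationals $s>t$ in $[0,1]$ is empty, so set $F(1,b) \coloneqq 1$ directly. Also choose the null set $N$ measurable, so that redefining the kernel on $N$ keeps $b \mapsto p(S|b)$ measurable.
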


See e.g.~\cite[Lemma 10.4.3 \& Proposition 10.4.12]{Bogachev2007} for proofs of this standard result.
To understand the definition, it may also help to note that
\begin{equation}
	\label{eq:fiberconcentration}
	p(\rho_X^{-1}(b)|b) = 1
\end{equation}
for $\omega$-almost all $b \in B$~\cite[Example~10.4.11]{Bogachev2007}.
In other words, $p(-|\cdot)$ is essentially a stochastic section of $\rho_X$.

\begin{defi}
  Let $X$, $Y$, and $B$ be standard Borel spaces with measurable maps $\rho_X : X \rightarrow B$ and $\rho_Y : Y \rightarrow B$,
	and fix $\omega \in \giry B$.
	Suppose that $p \in \giry X$ and $q \in \giry Y$ satisfy $(\rho_X)_\ast p = \omega = (\rho_Y)_\ast q$.
	Then the \emph{relative product} of $p$ and $q$ over $\omega$ is the probability measure on $X \times Y$ defined by
  \begin{equation}
		\label{eq:relativeproductmeasure}
		(p \otimes_\omega q)(S) \coloneqq \int_{b \in B} \left(p(-|b) \otimes q(-|b)\right)(S) \ \omega(\dx b)
		\qquad
		\forall S \in \Sigma_{X \times Y}.
  \end{equation}
\end{defi}

Due to the uniqueness of regular conditional probabilities up to $\omega$-a.e.~equality, the integral and therefore the relative product measure are well-defined;
the monotone convergence theorem implies $\sigma$-additivity.

If $B$ is finite, then the relative product measure is characterized by
\[
	(p \otimes_\omega q)(S \times T) = \sum_{b \in B \: : \: \omega(b) > 0} \frac{p(S \cap \rho_X^{-1}(b)) \cdot q(T \cap \rho_Y^{-1}(b))}{\omega(b)}.
\]
In particular if $X$ and $Y$ are also discrete, then the probabilities of individual outcomes are given by
\[
	(p \otimes_\omega q)(x,y) = \begin{cases}
		\frac{p(x) \cdot q(y)}{\omega(\rho_X(x))} = \frac{p(x) \cdot p(y)}{\omega(\rho_Y(y))} & \text{if } \rho_X(x) = \rho_Y(y) \text{ and } \omega(\rho_X(x)) > 0, \\
		0 & \text{otherwise.}
	\end{cases}
\]
The zero case is a manifestation of the following general support property.

\begin{lem}[{e.g.~\cite[Prop.~6.16(1)]{Einsiedler2011}}]
	\label{lem:relativeproductsupport}
	The relative product measure $p \otimes_\omega q$ is supported on the pullback
	\begin{equation}
		\label{eq:pullbackdiagram2}
		\begin{tikzcd}
			X \times_B Y \arrow[r, "\pi_X"] \arrow[d, "\pi_Y"'] & X \arrow[d, "\rho_X"] \\
			Y \arrow[r, "\rho_Y"'] & B
		\end{tikzcd}
	\end{equation}
	considered as a measurable subset of $X \times Y$.
\end{lem}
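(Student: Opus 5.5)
We need to show $(p \otimes_\omega q)(X \times_B Y) = 1$, where $X \times_B Y = \{(x,y) : \rho_X(x) = \rho_Y(y)\}$. The first step is to check that this set is measurable in $X \times Y$. Since $B$ is standard Borel, its diagonal $\Delta_B \subseteq B \times B$ is measurable: it is closed for a Polish topology, and $\Sigma_B \otimes \Sigma_B$ is the Borel $\sigma$-algebra of $B \times B$ by separability. The map $\rho_X \times \rho_Y : X \times Y \to B \times B$ is measurable, so the pullback $X \times_B Y = (\rho_X \times \rho_Y)^{-1}(\Delta_B)$ is measurable as well.

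By the defining formula \eqref{eq:relativeproductmeasure}, it then suffices to show that
\[
	\bigl(p(-|b) \otimes q(-|b)\bigr)(X \times_B Y) = 1
\]
for $\omega$-almost all $b \in B$. Integrating against $\omega$ then gives total mass $1$.

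To prove this, I would use the fiber concentration property \eqref{eq:fiberconcentration}, applied to both $p$ and $q$. It yields an $\omega$-conull set of $b$ such that $p(\rho_X^{-1}(b)|b) = 1$ and $q(\rho_Y^{-1}(b)|b) = 1$. Singletons $\{b\}$ are measurable in a standard Borel space, so these fibers are measurable. For such $b$, the rectangle $\rho_X^{-1}(b) \times \rho_Y^{-1}(b)$ lies inside $X \times_B Y$. By \eqref{eq:productmeasure}, this rectangle has $p(-|b) \otimes q(-|b)$-measure $1 \cdot 1 = 1$. By monotonicity the pullback then also has measure $1$, which proves the claim.

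There is no serious obstacle here. The only points requiring care are the measurability of the diagonal, which uses countable separation of points in a standard Borel space, and the fact that \eqref{eq:fiberconcentration} holds only almost surely. The latter is harmless, since we integrate against $\omega$.
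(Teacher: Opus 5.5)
Your proposal is correct and follows the paper's proof essentially step for step. Both show that $X \times_B Y = (\rho_X \times \rho_Y)^{-1}(\Delta_B)$ is measurable, use fiber concentration to see that $\rho_X^{-1}(b) \times \rho_Y^{-1}(b) \subseteq X \times_B Y$ has $p(-|b) \otimes q(-|b)$-measure $1$ for $\omega$-almost every $b$, and then integrate.
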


\begin{proof}
	Since $B$ is standard Borel, its diagonal $\Delta_B \subseteq B \times B$ is measurable. Hence
	\[
		X \times_B Y = (\rho_X \times \rho_Y)^{-1}(\Delta_B)
	\]
	is indeed a measurable subset of $X \times Y$.
	Moreover, the fiber concentration property~\eqref{eq:fiberconcentration} gives
	\[
		p(\rho_X^{-1}(b)|b) = 1,
		\qquad
		q(\rho_Y^{-1}(b)|b) = 1
	\]
	for $\omega$-almost every $b \in B$.
	For each such $b$, we have
	\[
		\rho_X^{-1}(b) \times \rho_Y^{-1}(b) \subseteq X \times_B Y,
	\]
	and therefore
	\[
		\bigl(p(-|b) \otimes q(-|b)\bigr)(X \times_B Y) = 1.
	\]
	Thus with $S = X \times_B Y$ in~\eqref{eq:relativeproductmeasure}, the integrand in~\eqref{eq:relativeproductmeasure} is a.e.~equal to $1$,
	and hence the integral is $1$ as well.
\end{proof}

We now turn to further categorical aspects of relative product measures.
Given that the construction takes place ``over $B$'', it is natural to work with slice categories like $\borelmeas/B$.
An object of $\borelmeas/B$ is a pair $(X, \rho_X)$ where $X$ is a standard Borel space and $\rho_X: X \rightarrow B$ is a measurable map.
We often suppress the map $\rho_X$ in the notation and simply write $X$, thinking of $\rho_X$ as extra structure on the measurable space $X$.
In this slice category, the pullback diagram~\eqref{eq:pullbackdiagram2} is the categorical product of $X$ and $Y$ over $B$.

We also fix the base probability measure $\omega \in \giry B$ throughout.
Then the relative version of the probability measures functor is
\begin{align*}
	\giry^\omega \: : \: \borelmeas/B & \longrightarrow \borelmeas \\
	(X, \rho_X) & \longmapsto \{ p \in \giry X \: : \: (\rho_X)_\ast p = \omega \}.
\end{align*}
The action on a morphism of $\borelmeas/B$, which is a commutative triangle of the form
\[
	\begin{tikzcd}
		X \arrow[rr, "f"] \arrow[dr, "\rho_X"'] & & Y \arrow[dl, "\rho_Y"] \\
		& B &
	\end{tikzcd}
\]
is again given by the pushforward $f_\ast$.

Our next goal is to show that the formation of relative product measures defines a natural transformation,
but this requires a bit of preparation.

\begin{lem}\label{lem:pushforwarddisintegration}
  Let $f : X \to Y$ be a morphism in $\borelmeas/B$.
	Then for every $p \in \giry^\omega X$, the Markov kernel
	\begin{align*}
		B & \longrightarrow \giry Y \\
		b & \longmapsto f_\ast p(-|b)
	\end{align*}
	is a regular conditional probability of $f_\ast p$ with respect to $\rho_Y$.
\end{lem}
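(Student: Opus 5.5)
We verify the two conditions in the definition of a regular conditional probability, after first checking that the relevant base measure is still $\omega$. Since $f$ is a morphism in $\borelmeas/B$, we have $\rho_Y \circ f = \rho_X$. Hence
\[
	(\rho_Y)_\ast (f_\ast p) = (\rho_Y \circ f)_\ast p = (\rho_X)_\ast p = \omega .
\]
So the measure appearing in \eqref{eq:disintegration} for $f_\ast p$ with respect to $\rho_Y$ is again $\omega$. In other words, $f_\ast p \in \giry^\omega Y$, and the claimed kernel should disintegrate $f_\ast p$ against the same $\omega$ as $p$.

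Next we check that $b \mapsto f_\ast p(-|b)$ is a Markov kernel from $B$ to $Y$.
\begin{enumerate}[label=(\roman*)]
	\item For each $b$, the map $f_\ast p(-|b)$ is the pushforward of the probability measure $p(-|b)$, so it is a probability measure on $Y$.
	\item For $S \in \Sigma_Y$, the map $b \mapsto f_\ast p(S|b) = p(f^{-1}(S)|b)$ is measurable. This holds because $f^{-1}(S) \in \Sigma_X$ by measurability of $f$, and $p(-|\cdot)$ is a Markov kernel.
\end{enumerate}

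Finally we verify the disintegration identity \eqref{eq:disintegration} for $f_\ast p$. Fix $S \in \Sigma_Y$ and $T \in \Sigma_B$. Using $\rho_X = \rho_Y \circ f$, we get
\[
	f^{-1}\bigl(S \cap \rho_Y^{-1}(T)\bigr) = f^{-1}(S) \cap f^{-1}(\rho_Y^{-1}(T)) = f^{-1}(S) \cap \rho_X^{-1}(T).
\]
Therefore
\[
	(f_\ast p)\bigl(S \cap \rho_Y^{-1}(T)\bigr) = p\bigl(f^{-1}(S) \cap \rho_X^{-1}(T)\bigr) = \int_{b \in T} p(f^{-1}(S)|b)\, \omega(\dx b) = \int_{b \in T} f_\ast p(S|b)\, \omega(\dx b).
\]
The middle equality is \eqref{eq:disintegration} for $p$, applied to the measurable set $f^{-1}(S)$. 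This is exactly the required identity.

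No step is a genuine obstacle. The only point needing care is the preimage identity above, which is where commutativity of the triangle over $B$ enters. It is also what makes the base measure for $f_\ast p$ equal to $\omega$, so that the integrals are taken against the correct measure.
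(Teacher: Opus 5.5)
Your proof is correct and follows essentially the same route as the paper. You check $(\rho_Y)_\ast(f_\ast p) = \omega$ and derive the disintegration identity from $f^{-1}(S \cap \rho_Y^{-1}(T)) = f^{-1}(S) \cap \rho_X^{-1}(T)$; the only cosmetic difference is that you verify the Markov kernel axioms directly, while the paper composes $p(-|\cdot) : B \to \giry X$ with the measurable pushforward $f_\ast : \giry X \to \giry Y$, which amounts to the same check.
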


\begin{proof}
	The displayed map is a Markov kernel because it is the composite of the Markov kernel
	$p(-|\cdot) : B \to \giry X$ with the measurable pushforward map $f_\ast : \giry X \to \giry Y$.
	Since $f$ is a morphism over $B$, we have
	\[
		(\rho_Y)_\ast(f_\ast p)
		= (\rho_Y \circ f)_\ast p
		= (\rho_X)_\ast p
		= \omega.
	\]
	Consequently, for $S \in \Sigma_Y$ and $T \in \Sigma_B$,
	\begin{align*}
		(f_\ast p)(S \cap \rho_Y^{-1}(T))
		&= p\bigl(f^{-1}(S) \cap \rho_X^{-1}(T)\bigr) \\
		&= \int_{b \in T} p(f^{-1}(S)|b) \, \omega(\dx b) \\
		&= \int_{b \in T} \bigl(f_\ast p(-|b)\bigr)(S) \, \omega(\dx b),
	\end{align*}
	which is precisely the disintegration identity~\eqref{eq:disintegration}.
\end{proof}

Here is now the promised naturality statement for relative product measures.

\begin{lem}
	\label{lem:relativeproductnaturality}
	The formation of relative product measures defines maps
	\[
		\nabla^\omega \: : \: \giry^\omega X \times \giry^\omega Y \longrightarrow \giry^\omega(X \times_B Y)
	\]
	that are natural in $X, Y \in \borelmeas/B$.
\end{lem}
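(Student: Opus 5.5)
The proof splits into two parts. First we check that $\nabla^\omega$ is well-defined, meaning that $p \otimes_\omega q$ really is an element of $\giry^\omega(X \times_B Y)$. Then we check naturality in $X$ and $Y$. Throughout, $X \times_B Y$ is the measurable subset $(\rho_X \times \rho_Y)^{-1}(\Delta_B)$ of $X \times Y$. It is standard Borel because a measurable subset of a standard Borel space, with the subspace $\sigma$-algebra, is again standard Borel. Its structure map to $B$ is $\rho_X \circ \pi_X = \rho_Y \circ \pi_Y$.

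For well-definedness, \cref{lem:relativeproductsupport} shows that $p \otimes_\omega q$ is concentrated on $X \times_B Y$, so we may regard it as a probability measure there. To compute its pushforward to $B$, take $T \in \Sigma_B$. Its preimage in $X \times_B Y$ is the trace of $\rho_X^{-1}(T) \times Y$. By \eqref{eq:relativeproductmeasure} this has measure $\int_B p(\rho_X^{-1}(T)|b)\,\omega(\dx b)$, which equals $p(\rho_X^{-1}(T)) = \omega(T)$ by \eqref{eq:disintegration} with $S = X$ and the roles of the sets adjusted. This shows the pushforward is $\omega$. The same computation with $S \times Y$ shows that the marginals are $p$ and $q$, although naturality does not require this.

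For naturality, let $f : X \to X'$ and $g : Y \to Y'$ be morphisms over $B$. The induced map $f \times_B g : X \times_B Y \to X' \times_B Y'$ is the restriction of $f \times g$; it lands in the pullback because $\rho_{X'} f = \rho_X$ and $\rho_{Y'} g = \rho_Y$. We must show
\[
	(f \times_B g)_\ast (p \otimes_\omega q) = f_\ast p \otimes_\omega g_\ast q .
\]
Both sides are supported on the pullback, so it suffices to compare them as measures on $X' \times Y'$, where the left side is $(f \times g)_\ast(p \otimes_\omega q)$. By \cref{lem:pushforwarddisintegration}, $b \mapsto f_\ast p(-|b)$ and $b \mapsto g_\ast q(-|b)$ are regular conditional probabilities of $f_\ast p$ and $g_\ast q$. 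Since the relative product is independent of the choice of conditional (by the a.e.\ uniqueness in \cref{prop:regularconditionalprobability}), we may use these versions in \eqref{eq:relativeproductmeasure}. Then for measurable $S \subseteq X' \times Y'$ we compute
\begin{align*}
	(f_\ast p \otimes_\omega g_\ast q)(S)
	&= \int_B \bigl(f_\ast p(-|b) \otimes g_\ast q(-|b)\bigr)(S)\,\omega(\dx b) \\
	&= \int_B \bigl(p(-|b) \otimes q(-|b)\bigr)\bigl((f\times g)^{-1}(S)\bigr)\,\omega(\dx b),
\end{align*}
where the second step uses naturality of the ordinary product measure $\nabla$ from \eqref{eq:productnaturality_giry}. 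The last integral is $(p \otimes_\omega q)((f \times g)^{-1}(S))$, which is the left side evaluated on $S$.

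The main obstacle is bookkeeping rather than any real estimate: one has to justify passing between measures on $X \times_B Y$ and measures on $X \times Y$ concentrated on it, and one has to invoke the freedom to choose conditionals. Both of these points are handled by \cref{lem:relativeproductsupport}, \cref{lem:pushforwarddisintegration} and the well-definedness remark made after \eqref{eq:relativeproductmeasure}.
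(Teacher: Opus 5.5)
Your proof is correct and follows essentially the same route as the paper. Both arguments get well-definedness from \cref{lem:relativeproductsupport} plus a marginal computation (you compute the pushforward to $B$ directly, whereas the paper first shows that the $X$-marginal is $p$). Both get naturality from \cref{lem:pushforwarddisintegration} together with naturality of ordinary product measures and the support lemma. One small correction: the step $\int_B p(\rho_X^{-1}(T)|b)\,\omega(\dx b) = p(\rho_X^{-1}(T))$ is the disintegration identity with $S = \rho_X^{-1}(T)$ and base set $B$, not with $S = X$; your conclusion is unaffected.
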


For the distribution functor $\distfin$ in place of $\giry$, this statement is also part of~\cite[Proposition~5.2.1]{Flori2016}.

\begin{proof}
	Let $p \in \giry^\omega X$ and $q \in \giry^\omega Y$.
	By \cref{lem:relativeproductsupport}, we can regard $p \otimes_\omega q$ as a probability measure on $X \times_B Y$.
	Its first marginal is $p$, since for every $S \in \Sigma_X$,
	\begin{align*}
		((\pi_X)_\ast(p \otimes_\omega q))(S)
		&= \int_{b \in B} p(S|b)q(Y|b) \, \omega(\dx b) \\
		&= \int_{b \in B} p(S|b) \, \omega(\dx b) \\
		&= p(S).
	\end{align*}
	It follows that
	\[
		(\rho_X \circ \pi_X)_\ast(p \otimes_\omega q)
		= (\rho_X)_\ast p
		= \omega,
	\]
	so $p \otimes_\omega q \in \giry^\omega(X \times_B Y)$, making $\nabla^\omega$ well-defined.

	Suppose that we are given two morphisms in $\borelmeas/B$,
	\[
		\begin{tikzcd}[column sep=large]
			X \arrow[rr, "f"] \arrow[dr, "\rho_X"'] & & X' \arrow[dl, "\rho_{X'}"] \\
			& B &
		\end{tikzcd}
		\qquad\quad
		\begin{tikzcd}[column sep=large]
			Y \arrow[rr, "g"] \arrow[dr, "\rho_Y"'] & & Y' \arrow[dl, "\rho_{Y'}"] \\
			& B &
		\end{tikzcd}
	\]
	By \cref{lem:pushforwarddisintegration}, regular conditional probabilities of $f_\ast p$ and $g_\ast q$ are given by
	$b \mapsto f_\ast p(-|b)$ and $b \mapsto g_\ast q(-|b)$, respectively.
	Therefore, for every $S \in \Sigma_{X' \times Y'}$, naturality of ordinary product measures gives
	\begin{align*}
		(f_\ast p \otimes_\omega g_\ast q)(S)
		&= \int_{b \in B} \bigl(f_\ast p(-|b) \otimes g_\ast q(-|b)\bigr)(S) \, \omega(\dx b) \\
		&= \int_{b \in B} \bigl(p(-|b) \otimes q(-|b)\bigr)((f \times g)^{-1}(S)) \, \omega(\dx b) \\
		&= (p \otimes_\omega q)((f \times g)^{-1}(S)) \\
		&= \bigl((f \times g)_\ast(p \otimes_\omega q)\bigr)(S).
	\end{align*}
	Since $f$ and $g$ are morphisms over $B$, the map $f \times g$ restricts to
	$f \times_B g : X \times_B Y \to X' \times_B Y'$.
	Together with \cref{lem:relativeproductsupport}, the displayed equality gives
	\[
		(f \times_B g)_\ast(p \otimes_\omega q)
		= f_\ast p \otimes_\omega g_\ast q,
	\]
	which is the desired naturality.
\end{proof}

So similar to the formation of product measures,
the formation of relative product measures can be seen as a natural transformation
\begin{equation}
	\label{eq:relativeproductnaturality_giry}
	\nabla^\omega \: : \: \giry^\omega \_ \times \giry^\omega \_ \Longrightarrow \giry^\omega(\_ \times_B \_),
\end{equation}
where both sides are functors $\borelmeas/B \times \borelmeas/B \longrightarrow \setc$.

\begin{rem}
	\begin{enumerate}
		\item 
			If $B$ is a singleton, then the relative product reduces to the ordinary product,
			and \cref{lem:relativeproductnaturality} reduces to the naturality of ordinary product measures.
		\item 
			Clearly all of this applies likewise to the distribution functor $\dist$ on $\setc$:
			for any set $B$ and $\omega \in \dist B$,
			the formation of relative product distributions is a natural transformation
			\begin{equation}
				\label{eq:relativeproductnaturality_dist}
				\nabla^\omega \: : \: \dist\_ \times \dist\_ \Longrightarrow \dist(\_ \times \_)
			\end{equation}
			between functors $\setc/B \times \setc/B \longrightarrow \setc$.
	\end{enumerate}
\end{rem}

%\begin{rem}
%	In fact, the relative product also equips $\giry^\omega$ with the structure of a lax symmetric monoidal functor.
%	\note{Das kann man sogar aus der Eindeutigkeit folgern, falls wir sie nicht nur für zwei sondern für $n$ Faktoren zeigen.}
%	We leave the straightforward details to the reader.
%	For finite sets, the relevant associativity statement can also be seen as part of~\cite[Proposition~5.2.1]{Flori2016}.
%\end{rem}

\section{Main results}

\subsection{Statements}

A natural question now is, are there any other natural transformations of the form~\eqref{eq:productnaturality_giry} or \eqref{eq:productnaturality_dist}? 
Or is the formation of product measures the only natural way to combine two probability measures into a joint measure?
This is the first question that we answer in the affirmative.

\begin{theo}
	\label{theo:onenaturaltransformation}
	\begin{enumerate}
		\item 
			With $\giry : \meas \to \setc$ the Giry functor, the formation of product measures is the only family of maps
			\[
				\giry X \times \giry Y \longrightarrow \giry(X \times Y)
			\]
			which are natural in $X, Y \in \meas$.
		\item 
			The analogous statement for the distribution functor $\dist : \setc \to \setc$ (resp.~$\distfin : \finset \to \setc$) also holds.
	\end{enumerate}
\end{theo}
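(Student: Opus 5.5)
The plan is to reduce everything to finite sets, settle the finite case by a symmetry argument plus an approximation argument, and then transfer back to arbitrary measurable spaces using indicator maps. Throughout, let $\eta$ be any family of maps $\giry X \times \giry Y \to \giry(X \times Y)$ natural in $X$ and $Y$. Here $\finiteset{n}$ carries its power set as $\sigma$-algebra, so the finite case looks the same for $\giry$, $\dist$ and $\distfin$, and one argument covers both parts of the theorem.

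\emph{Step 1: reduction to $\finiteset{2}\times\finiteset{2}$.} Let $p \in \giry X$, $q \in \giry Y$, $S \in \Sigma_X$ and $T \in \Sigma_Y$. Let $\chi_S : X \to \finiteset{2}$ send $S$ to $1$ and its complement to $2$, and define $\chi_T$ likewise; both maps are measurable. Naturality gives
\[
	(\chi_S \times \chi_T)_\ast\, \eta(p,q) = \eta\bigl((\chi_S)_\ast p, (\chi_T)_\ast q\bigr).
\]
Evaluating at the point $(1,1)$ shows that $\eta(p,q)(S\times T)$ equals $\eta(\beta_s,\beta_t)(1,1)$. Here $\beta_s = (s,1-s)$ denotes the distribution on $\finiteset{2}$ with $\beta_s(1)=s$, and $s=p(S)$, $t=q(T)$. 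So it suffices to show $\eta(\beta_s,\beta_t)(1,1) = st$. The uniqueness part of \cref{defi:productmeasure} then gives $\eta(p,q)=p\otimes q$.

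\emph{Step 2: rational distributions on finite sets.} Let $u_n$ be the uniform distribution on $\finiteset{n}$. For permutations $\sigma \in \Sym(n)$ and $\tau \in \Sym(m)$, naturality gives $(\sigma\times\tau)_\ast \eta(u_n,u_m) = \eta(u_n,u_m)$. Since $\Sym(n)\times\Sym(m)$ acts transitively on $\finiteset{n}\times\finiteset{m}$, the measure $\eta(u_n,u_m)$ is uniform, and hence equals $u_n\otimes u_m$.

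Now let $p$ and $q$ be distributions on finite sets whose weights are rational. Over a common denominator, $p = h_\ast u_n$ and $q = h'_\ast u_m$ for suitable surjections $h$ and $h'$. Naturality of both $\eta$ and $\nabla$ then gives
\[
	\eta(p,q) = (h\times h')_\ast(u_n\otimes u_m) = p\otimes q.
\]

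\emph{Step 3: arbitrary real weights.} This is the main obstacle, because no continuity of $\eta$ is available. I would use a monotonicity trick. Take rationals $0\le a\le s$ and $0\le b\le t$, and consider on $\finiteset{3}$ the distributions
\[
	p'=(a,\,s-a,\,1-s), \qquad q'=(b,\,t-b,\,1-t).
\]
Merging the points $1,2$ pushes $p'$ to $\beta_s$. Merging the points $2,3$ instead pushes $p'$ to the rational distribution $\beta_a$. Apply the same two merges to $q'$, yielding $\beta_t$ and $\beta_b$.

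Naturality along the merges of $2,3$, combined with Step 2, gives $\eta(p',q')(1,1) = \eta(\beta_a,\beta_b)(1,1) = ab$. Naturality along the merges of $1,2$ gives
\[
	\eta(\beta_s,\beta_t)(1,1) = \eta(p',q')\bigl(\{1,2\}\times\{1,2\}\bigr) \ge ab.
\]
Taking the supremum over such rationals yields $\eta(\beta_s,\beta_t)(1,1)\ge st$. The swap of $\finiteset{2}$ maps $\beta_s$ to $\beta_{1-s}$, so applying the bound to the relabeled pairs gives the analogous inequalities for the other three cells: each cell's $\eta$-mass is at least the corresponding product $s(1-t)$, $(1-s)t$ or $(1-s)(1-t)$. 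Both sides sum to $1$ over the four cells, so all four inequalities are equalities. This completes Step 1.

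Only finite sets and maps between them were used for the finite-set reduction, so the argument works verbatim for $\distfin$ on $\finset$ and for $\dist$ on $\setc$, where one takes $S,T$ to be arbitrary subsets in Step 1.
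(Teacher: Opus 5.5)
Your proof is correct, and Steps 1 and 2 are the paper's argument: indicator maps into $\two$, the $\Sym(n)\times\Sym(m)$-invariance of $\eta(u_n,u_m)$, and pushforward of uniform distributions. One small slip: $h,h'$ need not be surjective when some weights vanish, but surjectivity is never used.

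Step 3 takes a genuinely different route, though it uses the same merge maps $\three\to\two$ as the paper (they are its $\theta^-$ and $\theta^+$).

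\emph{The paper's route.} It first fixes a rational $q$ and proves that $s\mapsto\alpha(s,q)(\{1\}\times T)$ is monotone. This uses a witness in $\dist\three$ and naturality under $\theta^\pm\times\id$. It then squeezes $p$ between rational approximations from below \emph{and} from above. Finally, it repeats the whole argument in the second variable to remove the rationality assumption on $q$.

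\emph{Your route.} You apply the merges in both coordinates at once. This yields only a lower bound, $\eta(\beta_s,\beta_t)(1,1)\ge st$, but it holds for arbitrary real $s,t$ in a single pass. Relabelling by the swap of $\two$ turns it into lower bounds on all four cells. Since both the $\eta$-masses and the products sum to $1$, every bound is an equality.

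\emph{What each buys.} Your version replaces the two-sided squeeze and the two-stage ``first $q$ rational, then general $q$'' procedure with one one-sided estimate plus normalization, so it is shorter. The paper's version isolates a clean monotonicity statement in each variable separately, which is the order-theoretic point it wants to emphasise. Both carry over to the relative setting of \cref{theo:onenaturaltransformation_relative}. For yours, you would use measurable dyadic lower approximations of the kernels and the swap of $\two$ times $\id_B$. You would then compare the four cells on sets of the form $\{i\}\times\{j\}\times S$ against their total mass $\omega(S)$.
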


For relative product measures, we have to restrict to standard Borel spaces.
But otherwise the statement is the same.

\begin{theo}
	\label{theo:onenaturaltransformation_relative}
	\begin{enumerate}
		\item 
			For any standard Borel space $B$ and $\omega \in \giry B$,
			the formation of relative product measures with respect to $\omega$ is the only family of maps
			\[
				\giry^\omega X \times \giry^\omega Y \longrightarrow \giry^\omega(X \times_B Y)
			\]
			which are natural in $X, Y \in \borelmeas/B$.
		\item 
			With $B$ a (finite) set, the analogous statement for $\dist^\omega : \setc/B \to \setc$ (resp.~$\distfin^\omega : \finset/B \to \setc$) also holds.
	\end{enumerate}
\end{theo}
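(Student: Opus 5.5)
The plan is to prove both theorems by a single strategy. First, naturality along indicator-type maps reduces an arbitrary natural family $\eta$ to its components on tiny objects. There, naturality along "merging" maps forces a Cauchy-type additivity, and boundedness together with the value on the terminal object pins everything down. I treat \cref{theo:onenaturaltransformation_relative}; \cref{theo:onenaturaltransformation} is the special case $B=\{\ast\}$, and the $\dist$, $\distfin$ versions follow since every map used below is between finite sets, or finite sets times $B$.

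\textbf{Reduction to $\two\times B$.} Let $\eta$ be natural. Fix $p\in\giry^\omega X$, $q\in\giry^\omega Y$, $S\in\Sigma_X$, $T\in\Sigma_Y$ and $U\in\Sigma_B$. The maps $(1_S,\rho_X):X\to\two\times B$ and $(1_T,\rho_Y):Y\to\two\times B$ are morphisms over $B$, where $\two\times B\to B$ is the projection. By naturality, the value $\eta(p,q)\bigl((S\times T)\cap(\rho_X\circ\pi_X)^{-1}(U)\bigr)$ equals $\eta(p',q')(\{(1,1)\}\times U)$ on $\two\times\two\times B\cong(\two\times B)\times_B(\two\times B)$. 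Here $p'$ is the measure $\omega$ with fiber kernel $\mathrm{Bern}(a(b))$, where $a(b)=p(S|b)$, and $q'$ is likewise built from $c(b)=q(T|b)$. Sets of the form $(S\times T)\cap(\text{preimage of }U)$ form a $\pi$-system generating the $\sigma$-algebra of $X\times_B Y$; in fact $U$ can be absorbed into $S$. So it suffices to show that the measure $\gamma(a,c):=\eta(p',q')(\{(1,1)\}\times -)$ on $B$ equals $ac\,\omega$. This is exactly what $p\otimes_\omega q$ gives by \eqref{eq:relativeproductmeasure}.

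\textbf{Additivity, locality, normalization.} Let $a_1,a_2:B\to[0,1]$ be measurable with $a_1+a_2\le 1$. Take $p''$ on $\three\times B$ with fiber weights $(a_1,a_2,1-a_1-a_2)$, and apply $\eta$ with $q'$. Pushing forward along the over-$B$ maps $\three\to\two$ given by $\{1,2\}\mapsto 1$, by $\{1\}\mapsto1$, and by $\{2\}\mapsto 1$ yields
\[
\gamma(a_1+a_2,c)=\gamma(a_1,c)+\gamma(a_2,c).
\]
The same argument in the second slot gives additivity in $c$.

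For locality, fix $U\in\Sigma_B$ and use the over-$B$ map $(i,b)\mapsto(i\cdot 1_U(b),b)$. It gives $\gamma(a1_U,c)=\gamma(a,c)|_U$. Consequently, for simple $a=\sum_i\alpha_i1_{U_i}$, additivity and locality give $\gamma(a,c)=\sum_i\gamma(\alpha_i,c)|_{U_i}$. Here $\alpha\mapsto\gamma(\alpha,c)(V)$ is an additive, nonnegative, hence monotone, function on $[0,1]$, so it is linear: $\gamma(\alpha,c)=\alpha\gamma(1,c)$.

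Monotonicity of $a\mapsto\gamma(a,c)$ follows from additivity, and it allows passing from simple $a$ to arbitrary measurable $a$ by squeezing $a$ between simple functions that are uniformly close. This yields $\gamma(a,c)=a\,\gamma(1,c)$, and symmetrically $\gamma(a,c)=ac\,\gamma(1,1)$.

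Finally, $\gamma(1,1)$ is computed from $\delta_1\otimes\omega$, which is the pushforward of $\omega$ along the section $B\to\two\times B$, $b\mapsto(1,b)$, of the terminal object $(B,\id)$. Since $\giry^\omega B=\{\omega\}$ and $B\times_B B=B$, we get $\eta(\omega,\omega)=\omega$, so $\gamma(1,1)=\omega$. Note that I never assume that $\eta$ preserves marginals; this is forced by the argument.

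\textbf{Main obstacle.} The delicate step is the passage from finite additivity to the full identity $\gamma(a,c)=ac\,\omega$ for arbitrary measurable $a,c$. This needs the locality trick, since without it only constant $a$ are controlled, together with a monotone approximation argument that uses nothing but additivity and nonnegativity. I also need to check carefully that all the auxiliary maps are genuine morphisms in $\borelmeas/B$, and that the rectangles over base sets determine measures on $X\times_B Y$; the latter uses \cref{lem:relativeproductsupport} and the measurability of the diagonal of $B$.
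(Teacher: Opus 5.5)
Your proposal is correct, but it takes a genuinely different route from the paper.

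The paper's proof is built on symmetry. It proceeds in four steps:
\begin{enumerate}
\item It computes $\alpha(\overline{u_X},\overline{u_Y})=u_{X\times Y}\otimes\omega$ from $\Sym(X)\times\Sym(Y)$-invariance and the transitive-action lemma, using that outputs lie in $\giry^\omega$.
\item It transports this to kernels with a common denominator via measurable maps $\finiteset{N}\times B\to X\times B$ (\cref{prop:relative_rationalcase}).
\item It uses the two coarse-grainings $\theta^\pm:\three\to\two$ only to get \emph{monotonicity}. This squeezes arbitrary Bernoulli kernels between dyadic ones, first in one slot for rational $q$ and then in the other.
\item The indicator reduction via $(\chi_S,\rho_X)$ comes last.
\end{enumerate}

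You do the indicator reduction first. You then replace the whole symmetry and rationality apparatus with one extra observation. The third merging map, sending only $2\mapsto 1$, identifies the paper's difference term $r(\{2\}\times T\times S)$ as $\gamma(a_2,c)$ itself, and this upgrades monotonicity to full additivity. Your locality map $(i,b)\mapsto(i\cdot 1_U(b),b)$ then makes $\gamma$ biadditive and local, hence of the form $ac\,\gamma(1,1)$. The normalization $\gamma(1,1)=\omega$ comes from the terminal object $(B,\id_B)$, using $\giry^\omega B=\{\omega\}$, rather than from the uniform case.

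What your route buys:
\begin{itemize}
\item It needs no permutations and no uniform measures.
\item It needs no representation of rational kernels as pushforwards, and no measurable construction of such maps over $B$.
\item It needs no bootstrapping from rational to arbitrary $q$.
\item It shows that naturality along coarse-grainings and sections alone already forces the relative product.
\end{itemize}

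What the paper's route buys: it runs parallel to its non-relative proof and to the invariance characterization of the independence copula. This makes the role of symmetry explicit, and the order-theoretic step is used only for approximation.
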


Taking $B$ to be a singleton recovers the standard Borel version of \cref{theo:onenaturaltransformation}, which naturally holds as well.

\begin{rem}
	\begin{enumerate}
		\item 
			The proofs of both results will first consider the case of finite sets and then extend from there.
			This is designed such that the proofs go through for all of $\giry$, $\dist$ and $\distfin$ simultaneously.
			If one is interested in $\giry$ only,
			then simpler proofs are possible using the Lebesgue measure instead together with the fact that every other probability measure on a standard Borel space is a pushforward of it.
			This is essentially the approach taken in the existing copula theory result~\cite[Theorem~3.1]{Durante2009}.
		\item 
			In \cref{theo:onenaturaltransformation,theo:onenaturaltransformation_relative}, we have considered the functors $\giry$ and $\giry^\omega$ to take values in $\setc$.
			With the corresponding $\giry : \meas \to \meas$ and $\giry^\omega : \borelmeas/B \to \borelmeas$ versions of these functors,
			the same statements hold as obvious corollaries: 
			the only difference is that the natural transformations are now required to have measurable components, which is a stronger condition.
			Therefore our results trivially imply the corresponding statements for these versions.
	\end{enumerate}
\end{rem}

\subsection{Proof of \cref{theo:onenaturaltransformation}}

Although we could prove \cref{theo:onenaturaltransformation} as a special case of \cref{theo:onenaturaltransformation_relative},
we prefer to give a direct proof in order to set the stage for the more general case of relative product measures.
Given a set $X$, we write $\Sym(X)$ for the symmetric group of $X$, which consists of bijections $\sigma: X \rightarrow X$.
We start the proof of \cref{theo:onenaturaltransformation} with the following basic observation.

\begin{lem}\label{lem:uniquepermutationinvariantmeasure}
  Let $X$ be a finite set and $G$ a group acting transitively on $X$. Then, there is exactly one $G$-invariant probability measure on $X$, namely the uniform measure $u_X$.
\end{lem}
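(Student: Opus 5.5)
The argument splits into existence and uniqueness, and both are short; the main point is to make precise what $G$-invariance of a measure means.

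Here a probability measure $p$ on the finite set $X$ (with power set as $\sigma$-algebra) is $G$-invariant if $g_\ast p = p$ for all $g \in G$. Equivalently, $p(g^{-1}(S)) = p(S)$ for all $S \subseteq X$, where $g$ is identified with the bijection of $X$ through which it acts. Since $X$ is finite, $p$ is determined by its point masses $p(x)$. Invariance is then equivalent to $p(g^{-1}(x)) = p(x)$ for all $x \in X$ and $g \in G$, or, replacing $g$ by $g^{-1}$, to $p(g x) = p(x)$.

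For existence, I check that $u_X$, given by $u_X(S) = |S|/|X|$, is invariant. This is immediate because each $g$ acts bijectively, so $|g^{-1}(S)| = |S|$. Note that $X$ is nonempty here, since otherwise there would be no probability measure on $X$ at all; I will remark that transitivity is understood for a nonempty $X$.

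For uniqueness, let $p$ be $G$-invariant and fix $x, y \in X$. By transitivity there is $g \in G$ with $g x = y$, and so $p(y) = p(gx) = p(x)$. Hence $p$ is constant on $X$. Normalization, $\sum_{x \in X} p(x) = 1$, then forces $p(x) = 1/|X|$ for every $x$, that is, $p = u_X$.

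There is no real obstacle. The only step needing care is the bookkeeping between $g_\ast p = p$ and the pointwise condition $p(gx) = p(x)$, which amounts to applying invariance to singletons and possibly swapping $g$ for $g^{-1}$. In later applications, $G$ will presumably be $\Sym(X)$ or a product of symmetric groups such as $\Sym(X) \times \Sym(Y)$ acting on $X \times Y$; in each case transitivity is evident.
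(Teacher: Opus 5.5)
Your proposal is correct and matches the paper's one-line proof: invariance makes the point masses constant along orbits, transitivity makes $X$ a single orbit, and normalization then forces $p = u_X$. Your explicit check that $u_X$ is invariant and your remark that $X$ must be nonempty spell out what the paper leaves as ``straightforward.''
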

\begin{proof}
  This is straightforward based on the fact that a bijection acts on measures by permuting the probabilities of individual outcomes.
\end{proof}

We call $p \in \dist X$ \emph{rational} if $p(x) \in \Q$ for all $x \in X$. A nice property of rational measures is that they are pushforwards of uniform measures on larger sets.
For the precise statement, recall our notation $[N] \coloneqq \{1, \ldots, N\}$.

\begin{lem}\label{lem:rationalmeasure}
  Let $X$ be a finite set and $p \in \dist X$ rational. Then there is $N \in \N$ and $f:\finiteset{N} \rightarrow X$ such that $p = f_\ast u_{\finiteset{N}}$.
\end{lem}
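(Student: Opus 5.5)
The plan is to clear denominators. Since $X$ is finite and every value $p(x)$ is rational, there is a common denominator $N \in \N$ with $p(x) = k_x / N$ for nonnegative integers $k_x$. Since $\sum_{x \in X} p(x) = 1$, these integers satisfy $\sum_{x \in X} k_x = N$.

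Next I define $f$ by splitting $\finiteset{N}$ into consecutive blocks. Enumerate $X = \{x_1, \ldots, x_m\}$ and write $k_i \coloneqq k_{x_i}$. Let $f : \finiteset{N} \to X$ send the $j$-th block of $k_j$ consecutive integers to $x_j$. Explicitly, $f(n) = x_j$ whenever
\[
	\sum_{i<j} k_i < n \le \sum_{i \le j} k_i .
\]
The blocks are pairwise disjoint and cover $\finiteset{N}$, since the $k_i$ sum to $N$. So $f$ is a well-defined function, and $|f^{-1}(x_j)| = k_j$.

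Finally I verify the pushforward pointwise. For each $x \in X$,
\[
	(f_\ast u_{\finiteset{N}})(x) = u_{\finiteset{N}}(f^{-1}(x)) = \frac{|f^{-1}(x)|}{N} = \frac{k_x}{N} = p(x).
\]
Two discrete measures on a finite set are equal as soon as they agree on singletons, so this gives $f_\ast u_{\finiteset{N}} = p$.

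No step here is a real obstacle. The only point needing care is that some $k_x$ may be $0$, in which case the corresponding block is empty. This causes no problem, because $f$ is not required to be surjective. It is also worth noting that $N \ge 1$ automatically, since $X$ carries a probability measure and is therefore nonempty.
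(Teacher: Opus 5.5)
Your proposal is correct and follows essentially the same route as the paper: choose a common denominator $N$, partition $\finiteset{N}$ into consecutive blocks of sizes $Np(x_i)$, map the $i$-th block to $x_i$, and check the pushforward on singletons. Your remarks that empty blocks are harmless because $f$ need not be surjective, and that $N \geq 1$, are correct details that the paper leaves implicit.
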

\begin{proof}
  Let $X = \{x_1, \ldots, x_n\}$ and choose a common denominator $N$ for $p(x_1), \ldots, p(x_n)$.
  This means that $p(x_i) = \frac{q_i}{N}$ for some $q_i \in \N$ with $q_1 + \ldots + q_n = N$.
  Define $f: \finiteset{N}  \rightarrow X$ by $f(j) \coloneqq x_{i(j)}$ for every $j \in [N]$,
  where $i(j) \in [n]$ is the unique index with $\sum_{k=1}^{i(j)-1}q_k < j \leq \sum_{k=1}^{i(j)} q_k$.
  Then we have for every $i \in [n]$,
  \[
    f_\ast u_{\finiteset{N}}(x_i) = u_{\finiteset{N}}(f^{-1}(x_i)) =  u_{\finiteset{N}} \left(\left\{\sum_{k=0}^{i-1} q_k + 1,\ldots, \sum_{k=0}^i q_k\right\}\right) = \frac{q_i}{N} = p(x_i).
  \]
  Thus, $p = f_\ast u_{\finiteset{N}}$.
\end{proof}

We can now use these two lemmas to already prove the claim for rational measures.

\begin{prop}\label{prop:rationalcase}
  Let $\dist$ be the distribution functor on $\finset$ and
  \begin{equation}
    \label{eq:alpha}
    \alpha: \dist \_ \times \dist \_ \Rightarrow \dist(\_ \times \_)
  \end{equation}
  any natural transformation.
  Then for all finite sets $X$ and $Y$ and all rational $p \in DX$ and $q \in DY$, we have
  \[
    \alpha(p,q) = p \otimes q.
  \]
\end{prop}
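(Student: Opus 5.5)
The plan is to reduce to the case of two uniform measures on sets $[N]$ and $[M]$, and then push forward using naturality. By \cref{lem:rationalmeasure}, write $p = f_\ast u_{[N]}$ and $q = g_\ast u_{[M]}$ for suitable maps $f : [N] \to X$ and $g : [M] \to Y$. Naturality of $\alpha$ in both arguments gives
\[
	\alpha(p,q) = \alpha(f_\ast u_{[N]}, g_\ast u_{[M]}) = (f \times g)_\ast \alpha(u_{[N]}, u_{[M]}).
\]
By naturality of product measures, we also have $p \otimes q = (f \times g)_\ast (u_{[N]} \otimes u_{[M]})$. Hence it suffices to show that $\alpha(u_{[N]}, u_{[M]}) = u_{[N]} \otimes u_{[M]} = u_{[N] \times [M]}$.

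For this uniform case I would use symmetry. Let $\sigma \in \Sym([N])$ and $\tau \in \Sym([M])$. Then $\sigma_\ast u_{[N]} = u_{[N]}$ and $\tau_\ast u_{[M]} = u_{[M]}$, so naturality with respect to the pair $(\sigma, \tau)$ yields
\[
	(\sigma \times \tau)_\ast \alpha(u_{[N]}, u_{[M]}) = \alpha(u_{[N]}, u_{[M]}).
\]
Thus $\alpha(u_{[N]}, u_{[M]})$ is invariant under the group $G = \Sym([N]) \times \Sym([M])$ acting on $[N] \times [M]$ componentwise. This action is transitive: any $(i,j)$ can be moved to any $(i',j')$ by a pair of transpositions. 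By \cref{lem:uniquepermutationinvariantmeasure}, the only $G$-invariant probability measure is the uniform measure on $[N]\times[M]$, and this equals $u_{[N]} \otimes u_{[M]}$.

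There is no real obstacle here. The only point needing care is that naturality is used in the two-variable sense, for maps of the form $f \times g$; this is exactly the naturality of $\alpha$ as a transformation between functors $\finset \times \finset \to \setc$. One should also observe that the marginal condition is never used: it follows automatically from the result.
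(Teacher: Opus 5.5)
Your proposal is correct and follows essentially the same route as the paper: you prove the uniform case via $\Sym\times\Sym$-invariance, transitivity and \cref{lem:uniquepermutationinvariantmeasure}, and you reduce rational measures to uniform ones via \cref{lem:rationalmeasure} together with naturality of both $\alpha$ and $\nabla$. The only cosmetic difference is that you do the reduction first, so you only need the uniform case on $[N]\times[M]$, whereas the paper proves the uniform case for arbitrary finite $X$ and $Y$ before reducing.
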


\begin{proof}
  Let $X = \{x_1,\ldots,x_n\}$ and $Y = \{y_1,\ldots,y_m\}$ be finite sets. By naturality of $\alpha$, for all permutations $\sigma \in \operatorname{Sym}(X)$ and $\tau \in \operatorname{Sym}(Y)$,
  \[
    \begin{tikzcd}
      \dist X \times \dist Y \arrow[r, "{\alpha}"] \arrow[d, "\dist\sigma \times \dist\tau"'] & \dist(X\times Y) \arrow[d, "\dist(\sigma \times \tau)"] \\
      \dist X \times \dist Y \arrow[r, "{\alpha}"']                                   & \dist(X \times Y)
    \end{tikzcd}
  \]
  commutes.
  Since the uniform measures $u_X$ and $u_Y$ are invariant under permutations, we have
  \[
    \alpha(u_X,u_Y) = (\sigma \times \tau)_\ast (\alpha(u_X,u_Y)),
  \]
  that is $\alpha(u_X,u_Y)$ is $\operatorname{Sym}(X) \times \operatorname{Sym}(Y)$-invariant. Since the action of $\operatorname{Sym}(X) \times \operatorname{Sym}(Y)$ on $X \times Y$ is transitive, we conclude its uniformity
  \[
	  \alpha(u_X,u_Y) = u_{X \times Y}
  \]
  by \cref{lem:uniquepermutationinvariantmeasure}.

  Now let $p \in \dist X$ and $q \in \dist Y$ be rational.
  Then \cref{lem:rationalmeasure} provides us with $N,M \in \N$ and maps $f:\finiteset{N} \rightarrow X$ and $g:\finiteset{M} \rightarrow Y$ such that $p = f_\ast u_{\finiteset{N}}$ and $q = g_\ast u_{\finiteset{M}}$. 
  %In particular, $p(x_i) = \frac{A_i}{N}$ and $q(y_j) = \frac{B_j}{M}$ for all $i,j$ and $A_i,B_j,M,N \in \N$.
  Thus, commutativity of
  \[
    \begin{tikzcd}
      \dist \finiteset{N} \times \dist \finiteset{M} \arrow[r, "{\alpha}"] \arrow[d, "\dist f \times \dist g"'] & \dist(\finiteset{N} \times \finiteset{M}) \arrow[d, "\dist(f \times g)"] \\
      \dist X \times \dist Y \arrow[r, "{\alpha}"']                               & \dist(X \times Y)
    \end{tikzcd}
  \]
  gives the second step in
  \begin{align*}
	  \alpha(p,q) & = \alpha(f_\ast u_{\finiteset{N}},g_\ast u_{\finiteset{M}}) \\[2pt]
			    & = (f \times g)_\ast u_{\finiteset{N} \times \finiteset{M}} \\[2pt]
			    & = (f \times g)_\ast (u_{\finiteset{N}} \otimes u_{\finiteset{M}}) \\[2pt]
			    & = f_\ast u_{\finiteset{N}} \otimes g_\ast u_{\finiteset{M}} \\[2pt]
			    & = p \otimes q,
  \end{align*}
  while the fourth step is the naturality of the product measure construction $\nabla$.
\end{proof}

The general case without the restriction to rational measures can now be obtained by a density argument. The elegance of this argument lies in translating the natural order on the interval $[0,1]$ into a purely probabilistic condition. This is very much in the spirit of \emph{effectus theory}, a categorical framework for probabilistic logic \cite{Jacobs2015}.

\begin{lem}\label{prop:order}
  Define $\theta^-,\theta^+: \three \rightarrow \two$ by
  \begin{align*}
	  \theta^-(1) = \theta^-(2) = 1, &&  \theta^+(1) = 1,  \\
	  \theta^-(3) = 2, && \theta^+(2) = \theta^+(3) = 2.
  \end{align*}
  Then for all $p, q \in \dist\two$, we have $p(1) \leq q(1)$ if and only if there is $\omega \in \dist\three$ such that
  \begin{equation}\label{eq:theta}
    \theta^+_\ast\omega = p \qquad \text{and} \qquad \theta^-_\ast\omega = q.
  \end{equation}
  In this case, the only such $\omega$ satisfies
  \begin{equation}
	  \label{eq:theta2}
	  \omega(1) = p(1), \qquad \omega(2) = q(1)-p(1), \qquad \omega(3) = q(2).
  \end{equation}
\end{lem}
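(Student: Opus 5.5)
This is a direct computation, and I will prove both directions together with the uniqueness formula. The approach is to write out the two pushforward conditions in coordinates, since a measure on $\two$ is determined by its value at $1$.

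First I compute the pushforwards of an arbitrary $\omega \in \dist\three$ from the definitions of $\theta^\pm$. We have $\theta^{+\,-1}(1) = \{1\}$, so $(\theta^+_\ast\omega)(1) = \omega(1)$. We also have $\theta^{-\,-1}(1) = \{1,2\}$ and $\theta^{-\,-1}(2) = \{3\}$, so $(\theta^-_\ast\omega)(1) = \omega(1)+\omega(2)$ and $(\theta^-_\ast\omega)(2) = \omega(3)$. Hence~\eqref{eq:theta} is equivalent to the two equations
\[
\omega(1) = p(1), \qquad \omega(1)+\omega(2) = q(1).
\]
Together with normalization, these force $\omega(2) = q(1)-p(1)$ and $\omega(3) = 1 - q(1) = q(2)$. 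This shows that any $\omega$ satisfying~\eqref{eq:theta} must be given by~\eqref{eq:theta2}, which proves uniqueness.

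For the forward implication, suppose such an $\omega$ exists. Then $q(1) - p(1) = \omega(2) \geq 0$, so $p(1) \leq q(1)$. For the converse, assume $p(1) \leq q(1)$ and define $\omega$ by~\eqref{eq:theta2}. All three entries are nonnegative: $\omega(1) = p(1) \geq 0$ and $\omega(3) = q(2) \geq 0$ trivially, and $\omega(2) \geq 0$ by the assumption. They sum to $p(1) + q(1) - p(1) + q(2) = 1$, so $\omega \in \dist\three$. By the coordinate computation above, $\omega$ satisfies~\eqref{eq:theta}.

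There is no real obstacle here. The only points needing care are the nonnegativity of $\omega(2)$, which is exactly where the hypothesis $p(1)\le q(1)$ enters, and remembering that a measure on $\two$ is determined by its value at $1$, so matching first coordinates suffices.
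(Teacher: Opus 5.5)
Your proof is correct and takes the same approach as the paper: it solves the linear system coming from the two pushforward conditions, shows that the solution is forced to be \eqref{eq:theta2}, and observes that nonnegativity of $\omega(2) = q(1)-p(1)$ is exactly the condition $p(1) \le q(1)$. The only cosmetic difference is that you combine the first-coordinate equations with the normalization of $\omega$, whereas the paper solves the full system and then checks normalization afterwards.
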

\begin{proof}
  It is straightforward that the only solution of the linear system \cref{eq:theta} is given by \cref{eq:theta2} and that it satisfies the normalization condition $\omega(1) + \omega(2) + \omega(3) = 1$.
  It is nonnegative if and only if $p(1) \leq q(1)$.
\end{proof}

If we identify $D\two \cong [0,1]$ via $p \mapsto p(1)$, the relation $p(1) \leq q(1)$ becomes the natural order on $[0,1]$.
Hence  the natural order on $[0,1]$ induces an order on $\dist\two$ which is witnessed, in purely probabilistic terms, by $\omega$.

\begin{lem}\label{lem:bernoulliirrational}
	For all $p, q \in \dist\two$, every natural transformation $\alpha$ as in~\eqref{eq:alpha} satisfies
  \[
	  \alpha(p,q) = p \otimes q.
  \]
\end{lem}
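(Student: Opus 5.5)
We argue by density, using \cref{prop:order} to turn the order on $\dist\two \cong [0,1]$ into a statement that naturality preserves. Identify $\dist(\two \times \two)$ with probability vectors $(r_{11}, r_{12}, r_{21}, r_{22})$. Since $\alpha(p,q)$ is a probability measure on a finite set, it is determined by its values on the four points. The aim is to squeeze each relevant quantity of $\alpha(p,q)$ between values of $\alpha$ at rational arguments, which \cref{prop:rationalcase} identifies as products.

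The first step is monotonicity: $\alpha(p,q)(1,1)$ is monotone in each argument. Let $p \le p'$ in the sense $p(1) \le p'(1)$, and take $\omega \in \dist\three$ from \cref{prop:order} with $\theta^+_\ast \omega = p$ and $\theta^-_\ast \omega = p'$. Naturality applied to $\theta^\pm \times \id_\two$ gives
\[
  \alpha(p,q) = (\theta^+ \times \id)_\ast \alpha(\omega,q), \qquad \alpha(p',q) = (\theta^- \times \id)_\ast \alpha(\omega,q).
\]
Now $(\theta^+)^{-1}(1) = \{1\} \subseteq \{1,2\} = (\theta^-)^{-1}(1)$. Hence
\[
  \alpha(p,q)(\{1\}\times\{1\}) = \alpha(\omega,q)(\{1\}\times\{1\}) \le \alpha(\omega,q)(\{1,2\}\times\{1\}) = \alpha(p',q)(\{1\}\times\{1\}).
\]
The same argument works in the second argument. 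Applying it to the sets $\{\cdot\} \times \{2\}$ as well, and using the symmetry of relabelling $\two$ via the swap (again by naturality), we get monotonicity of each entry $r_{ij}$ in the appropriate direction: increasing or decreasing according to whether $i = 1$ or $i = 2$, and likewise for $j$.

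The second step handles arbitrary, possibly irrational, $p(1) = s$ and $q(1) = t$. Choose rationals $s_n \uparrow s$, $s'_n \downarrow s$ and $t_n \uparrow t$, $t'_n \downarrow t$. By monotonicity and \cref{prop:rationalcase},
\[
  s_n t_n \le \alpha(p,q)(1,1) \le s'_n t'_n,
\]
and letting $n \to \infty$ gives $\alpha(p,q)(1,1) = st$. The boundary cases $s, t \in \{0,1\}$ are rational, so one-sided sequences suffice there. The remaining entries follow in the same way from their own monotonicity, for instance $(1-s_n)(1-t_n) \ge r_{22}$.

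A cleaner route to the remaining entries is via marginals. Naturality with respect to the projections is not directly available, since the projection $\two \times \two \to \two$ is not of the form $f \times g$. Instead, take $g \colon \two \to [1]$, which gives $(\id \times g)_\ast \alpha(p,q) = \alpha(p, \delta)$ with $\delta$ the unique (rational) measure on $[1]$. By \cref{prop:rationalcase} this equals $p \otimes \delta$, so the first marginal of $\alpha(p,q)$ is $p$, and similarly the second marginal is $q$. Together with $r_{11} = st$, this forces all four entries to equal those of $p \otimes q$.

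The main obstacle is the monotonicity step: choosing the witness $\omega$ and the right preimage sets so that the inequality on $\dist\two$ transfers to an inequality for $\alpha$. The rest is a routine squeeze.
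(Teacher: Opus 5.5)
Your main argument is correct and is essentially the paper's proof. Monotonicity comes from the witness $\omega\in\dist\three$ of \cref{prop:order} together with naturality under $\theta^\pm\times\id$, and is followed by a squeeze between rational values computed by \cref{prop:rationalcase}. The only organizational difference is that you squeeze in both variables at once. This is legitimate, since the monotonicity argument never uses rationality of the other argument; the paper instead first treats rational $q$ and then repeats the approximation in the second variable.

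One caveat concerns your ``cleaner route via marginals'': \cref{prop:rationalcase} does not apply to $\alpha(p,\delta)$, because $p$ may be irrational. You can repair it by pushing forward along the constant map $c\colon\two\to\two$ with value $1$. Then $c_\ast q=\delta_1$ is the point mass at $1$, and naturality gives $\alpha(p,q)(\{1\}\times\two)=\alpha(p,\delta_1)(\{1\}\times\{1\})=s$, using your already-established identity $r_{11}=st$ at $t=1$. In any case, your first route via monotonicity of each entry already completes the proof.
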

\begin{proof}
	We first treat the case where $q$ is rational.
  For $T \subseteq \two$, define
  \begin{align*}
		\phi_T \: : \: \dist\two & \longrightarrow [0,1] \\
		s & \longmapsto \alpha(s,q)(\{1\} \times T).
  \end{align*}
	Let $p,p' \in \dist \two$ satisfy $p \leq p'$, and let $\omega \in \dist\three$ be the witness of this inequality as per \cref{prop:order}.
	Then,
  \begin{align*}
    \phi_T(p') - \phi_T(p) &= ((\theta^- \times \id)_\ast\alpha(\omega,q))(\{1\} \times T) - ((\theta^+ \times \id)_\ast\alpha(\omega,q))(\{1\} \times T) = \\
                          &= \alpha(\omega,q)(\{1,2\} \times T) - \alpha(\omega,q)(\{1\} \times T) = \\
                          &= \alpha(\omega,q)(\{2\} \times T) \geq 0.
  \end{align*}
  Hence $\phi_T$ is monotonically nondecreasing.

	Let $p \in \dist\two$ be arbitrary. By density of $\Q$ in $\R$, we can find rational $(p_n^-)_{n \in \N}$ and $(p_n^+)_{n \in \N}$ in $\dist\two$ 
	with $p_n^- \nearrow p$ and $p^+_n \searrow p$. Thus, by monotonicity,
    \[
      \phi_T(p^-_n) \leq \phi_T(p) \leq \phi_T(p^+_n)
    \]
  for all $n \in \N$. By definition of $\phi_T$ and \cref{prop:rationalcase}, we thus obtain
  \[
    p^-_n(1) \cdot q(T) \leq \alpha(p,q)(\{1\} \times T) \leq p^+_n(1) \cdot q(T).
  \]
  Taking $n \to \infty$ shows that $\alpha(p,q)(\{1\} \times T) = p(1) \cdot q(T)$ for all $T \subseteq \two$.
	Since we can make the same argument with $\{2\}$ in place of $\{1\}$, the claim follows by the uniqueness part of \cref{defi:productmeasure}.

	The statement is now proven for all $p$ and rational $q$.
	To lift to arbitrary $q$, we can use the same rational approximation argument as above, but now with respect to $q$ instead of $p$.
\end{proof}

The final steps are now relatively straightforward.

\begin{proof}[Proof of \cref{theo:onenaturaltransformation}]
	Let $\alpha : \giry\_ \times \giry\_ \Rightarrow \giry(\_ \times \_)$ be any natural transformation.
	By \cref{lem:bernoulliirrational}, $\alpha$ and $\nabla$ agree on $\giry\two \times \giry\two$.
  If $X$ and $Y$ are arbitrary measurable spaces, let $p \in \giry(X)$ and $q \in \giry(Y)$ be given.
  For any $S \in \Sigma_X$ and $T \in \Sigma_Y$, consider the pushforwards of $p$ and $q$ along the indicator functions,
	\[
		p_S \coloneq \chi^S_\ast p, \qquad q_T \coloneq \chi^T_\ast q.
	\]
	Then we already know $\alpha(p_S,q_T) = p_S \otimes q_T$ by the finite case.
	Therefore, and by naturality of $\alpha$,
	\begin{align*}
		\alpha(p,q)(S \times T) & = ((\chi^S \times \chi^T)_\ast\alpha(p,q))(\{1\} \times \{1\}) \\[2pt]
																		& = \alpha(p_S,q_T)(\{1\} \times \{1\}) \\[2pt]
																	& = p(S) \cdot q(T).
	\end{align*}
	The claimed $\alpha = \nabla$ hence follows as before.

	By restricting the proof from $\giry$ to $\dist$ and $\distfin$, we also obtain the analogous statement for the distribution functor on $\setc$.
\end{proof}

\subsection{Proof of \cref{theo:onenaturaltransformation_relative}}

We follow the same steps as for ordinary product measures: first measures with uniform fibers, then rational probabilities,
then an order-theoretic approximation argument on $\two$, and finally indicator functions.

Throughout, fix the base space $B \in \borelmeas$, the base measure $\omega \in \giry B$ and a natural transformation
\[
	\alpha \: : \: \giry^\omega\_ \times \giry^\omega\_ \Longrightarrow \giry^\omega(\_ \times_B \_).
\]
Our goal is to prove that $\alpha = \nabla^\omega$.

For a finite set $X$ and a Markov kernel $p : B \to \dist X$, write
\[
	\overline{p} \coloneq \int_{b \in B} p(-|b) \otimes \delta_b \, \omega(\dx b)
	\quad\in\quad \giry^\omega(X \times B),
\]
where $X \times B$ is regarded as an object over $B$ via the projection.
For example with $u_X$ the uniform measure on $X$, and with every measure regarded as a constant kernel, 
we get $\overline{u_X} = u_X \otimes \omega$.
In general, we have
\[
	\overline{p}(\{x\} \times S) = \int_{b \in S} p(x|b)\,\omega(\dx b) \qquad \forall S \in \Sigma_B.
\]
Hence the kernel $b \mapsto p(-|b) \otimes \delta_b$ is a regular conditional probability of $\overline{p}$ with respect to the projection to $B$.

The analogue of rational measures that we need consists of kernels whose probabilities have a common denominator over the whole base:

\begin{prop}\label{prop:relative_rationalcase}
	Let $X,Y$ be nonempty finite sets and $p : B \to \dist X$, $q : B \to \dist Y$ Markov kernels.
	Suppose that there are positive integers $N,M$ such that
	\[
		Np(x|b) \in \N, \qquad Mq(y|b) \in \N
		\qquad\quad
		\forall x \in X, y \in Y, b \in B.
	\]
	Then
	\[
		\alpha(\overline{p},\overline{q}) = \overline{p} \otimes_\omega \overline{q}.
	\]
\end{prop}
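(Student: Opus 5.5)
The plan follows the same two steps as \cref{prop:rationalcase}: first the case of uniform constant kernels via symmetry, then the general case by realizing $\overline{p}$ and $\overline{q}$ as pushforwards along morphisms over $B$.

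Throughout, I identify the pullback $(X \times B) \times_B (Y \times B)$ with $X \times Y \times B$, with structure map the projection to $B$. This is an isomorphism in $\borelmeas/B$, given by $((x,b),(y,b)) \mapsto (x,y,b)$.

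\emph{Step 1: uniform fibers.} Consider $\overline{u_X} = u_X \otimes \omega$ and $\overline{u_Y} = u_Y \otimes \omega$. For $\sigma \in \Sym(X)$ and $\tau \in \Sym(Y)$, the maps $\sigma \times \id_B$ and $\tau \times \id_B$ are morphisms over $B$ that fix these measures. Naturality of $\alpha$ therefore shows that $r \coloneqq \alpha(\overline{u_X},\overline{u_Y})$, viewed on $X \times Y \times B$, is invariant under $\sigma \times \tau \times \id_B$.

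Fix $S \in \Sigma_B$. The finite measure $(x,y) \mapsto r(\{(x,y)\} \times S)$ on $X \times Y$ is then $\Sym(X)\times\Sym(Y)$-invariant. This group acts transitively on $X \times Y$, so the measure is constant. Its total mass is $r(X \times Y \times S) = \omega(S)$, because $r \in \giry^\omega$. Hence
\[
r(\{(x,y)\} \times S) = \frac{\omega(S)}{|X|\,|Y|}
\]
for all $x$, $y$ and $S$. Sets of this form generate the $\sigma$-algebra of $X \times Y \times B$ and form a $\pi$-system, so uniqueness of measures gives $r = u_X \otimes u_Y \otimes \omega$.

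It remains to see that $\overline{u_X} \otimes_\omega \overline{u_Y}$ equals the same measure. The constant kernels $b \mapsto u_X \otimes \delta_b$ and $b \mapsto u_Y \otimes \delta_b$ are regular conditional probabilities of $\overline{u_X}$ and $\overline{u_Y}$, as noted before the proposition. Plugging them into \eqref{eq:relativeproductmeasure} gives $u_X \otimes u_Y \otimes \omega$ directly.

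\emph{Step 2: common denominators.} I will build a morphism over $B$,
\[
F \colon \finiteset{N} \times B \to X \times B, \qquad F(j,b) \coloneqq (f_b(j), b),
\]
with $F_\ast(u_{\finiteset{N}} \otimes \omega) = \overline{p}$. Here $f_b \colon \finiteset{N} \to X$ is the map from \cref{lem:rationalmeasure} for the measure $p(-|b)$, constructed with a fixed enumeration $X = \{x_1,\dots,x_n\}$ and the counts $q_i(b) \coloneqq Np(x_i|b) \in \N$. These counts sum to $N$ for every $b$, so the same $N$ works uniformly in $b$.

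\begin{itemize}
\item \emph{Measurability of $F$.} Each $q_i$ is a measurable function into a finite set. Hence for each $j$ the index $i(j)$ depends measurably on $b$, being determined by finitely many measurable conditions on the partial sums. So $F$ is measurable, and it clearly commutes with the projections to $B$.
\item \emph{Pushforward.} For $x \in X$ and $S \in \Sigma_B$,
\[
F_\ast(u_{\finiteset{N}}\otimes\omega)(\{x\}\times S) = \int_S \frac{|f_b^{-1}(x)|}{N}\,\omega(\dx b) = \int_S p(x|b)\,\omega(\dx b) = \overline{p}(\{x\}\times S),
\]
which gives the claimed identity.
\end{itemize}

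Similarly, I obtain $G \colon \finiteset{M}\times B \to Y \times B$ with $G_\ast(u_{\finiteset{M}}\otimes\omega) = \overline{q}$.

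Now I compute exactly as in \cref{prop:rationalcase}:
\[
\alpha(\overline{p},\overline{q}) = (F\times_B G)_\ast\,\alpha(\overline{u_{\finiteset N}},\overline{u_{\finiteset M}}) = (F\times_B G)_\ast\bigl(\overline{u_{\finiteset N}}\otimes_\omega\overline{u_{\finiteset M}}\bigr) = \overline{p}\otimes_\omega\overline{q}.
\]
The first equality is naturality of $\alpha$, the second is Step 1, and the third is \cref{lem:relativeproductnaturality}.

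The step I expect to need the most care is the measurable, fiberwise choice of $f_b$ in Step 2. The explicit interval construction of \cref{lem:rationalmeasure} should handle it, since $i(j)$ is given by finitely many inequalities between measurable integer-valued partial sums.
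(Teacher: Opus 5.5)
Your proposal is correct and follows the paper's proof essentially step for step. The uniform case uses invariance of $\alpha(\overline{u_X},\overline{u_Y})$ under $\sigma\times\tau\times\id_B$ together with the normalization $r(X\times Y\times S)=\omega(S)$, and the general case realizes the interval construction of \cref{lem:rationalmeasure} fiberwise as a measurable map over $B$, combined with naturality of $\alpha$ and \cref{lem:relativeproductnaturality}; you only spell out details the paper leaves implicit (the $\pi$-system uniqueness argument, the pushforward computation, and the verification $\overline{u_X}\otimes_\omega\overline{u_Y} = u_X\otimes u_Y\otimes\omega$).
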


From now on, we will freely use the identification
\[
	(X \times B) \times_B (Y \times B) \cong X \times Y \times B,
\]
which frequently allows for more convenient notation in the proofs.

\begin{proof}
	We first consider the uniform measures $u_X$ and $u_Y$.
	Naturality under $\sigma \times \id_B$ and $\tau \times \id_B$, for permutations $\sigma$ of $X$ and $\tau$ of $Y$, implies that
	\[
		r \coloneq \alpha(\overline{u_X},\overline{u_Y})
	\]
	is invariant under $(\sigma \times \tau) \times \id_B$.
	For every $S \in \Sigma_B$, transitivity of the action on $X \times Y$ therefore makes $r(\{(x,y)\} \times S)$ independent of $(x,y)$.
	Since $r$ has pushforward $\omega$ on $B$,
	the only possible value is
	\[
		r(\{(x,y)\} \times S) = \frac{\omega(S)}{|X|\,|Y|}.
	\]
	Thus $r = u_{X \times Y} \otimes \omega = \overline{u_X} \otimes_\omega \overline{u_Y}$.

	Now let $p,q$ satisfy the stated denominator conditions.
	As in \cref{lem:rationalmeasure}, write $X = \{x_1,\ldots,x_n\}$ and define a map over $B$ by
	\begin{align*}
		f \: : \: \finiteset{N} \times B & \longrightarrow X \times B \\
		(j,b) & \longmapsto (x_i,b),
	\end{align*}
	where $x_i$ is the unique element of $X$ with
	\[
		\sum_{k=1}^{i-1} Np(x_k|b) < j \leq \sum_{k=1}^{i} Np(x_k|b).
	\]
	The defining inequalities are measurable in $b$, and therefore $f$ is measurable.
	For each $b$, exactly $Np(x_i|b)$ elements map to $(x_i,b)$, and hence $f_\ast\overline{u_{\finiteset{N}}} = \overline{p}$.
	Similarly, there is a measurable map $g : \finiteset{M} \times B \to Y \times B$ over $B$ with $g_\ast\overline{u_{\finiteset{M}}} = \overline{q}$.
	By naturality, the uniform case, and \cref{lem:relativeproductnaturality},
	we have
	\begin{align*}
		\alpha(\overline{p},\overline{q})
		&= (f \times_B g)_\ast
		\alpha
		(\overline{u_{\finiteset{N}}},\overline{u_{\finiteset{M}}}) \\[2pt]
		&= (f \times_B g)_\ast
		\bigl(\overline{u_{\finiteset{N}}} \otimes_\omega \overline{u_{\finiteset{M}}}\bigr) \\[2pt]
		&= \overline{p} \otimes_\omega \overline{q}.
		\qedhere
	\end{align*}
\end{proof}

We next extend from these rational kernels to arbitrary Bernoulli kernels, in parallel with \cref{lem:bernoulliirrational}.
This means that for kernels $p,p' : B \to \dist\two$, we write
\[
	p \le p' \qquad \Longleftrightarrow \qquad
	p(1|b) \leq p'(1|b) \quad \forall b \in B.
\]
The witness in \cref{prop:order} can be chosen measurably: define $\eta : B \to \dist\three$ by
\[
	\eta(1|b) = p(1|b), \qquad
	\eta(2|b) = p'(1|b)-p(1|b), \qquad
	\eta(3|b) = p'(2|b).
\]
Writing $\theta_B^\pm \coloneq \theta^\pm \times \id_B$ then gives
\[
	(\theta_B^+)_\ast\overline{\eta} = \overline{p},
	\qquad
	(\theta_B^-)_\ast\overline{\eta} = \overline{p}'.
\]

\begin{lem}\label{lem:relative_bernoulliirrational}
	For all Markov kernels $p,q : B \to \dist\two$, we have
	\[
		\alpha(\overline{p},\overline{q})
		= \overline{p} \otimes_\omega \overline{q}.
	\]
\end{lem}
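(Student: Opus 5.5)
We follow the proof of \cref{lem:bernoulliirrational}, now approximating kernels rather than single numbers. First suppose $q$ satisfies the denominator condition of \cref{prop:relative_rationalcase} for some $M$. For $T \subseteq \two$ and $S \in \Sigma_B$, set
\[
	\phi_{T,S}(s) \coloneq \alpha(\overline{s},\overline{q})(\{1\} \times T \times S)
\]
for Markov kernels $s : B \to \dist\two$. We claim $\phi_{T,S}$ is monotone with respect to the pointwise order on kernels. Take $p \le p'$ and the measurable witness $\eta$ constructed above. Naturality of $\alpha$ along $\theta_B^\pm$ in the first argument and $\id$ in the second gives $\alpha(\overline{p},\overline{q}) = (\theta_B^+ \times_B \id)_\ast \alpha(\overline{\eta},\overline{q})$, and likewise for $\overline{p}'$ with $\theta_B^-$. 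Hence
\[
	\phi_{T,S}(p') - \phi_{T,S}(p) = \alpha(\overline{\eta},\overline{q})(\{2\} \times T \times S) \ge 0,
\]
using the identification $(\three \times B)\times_B(\two \times B) \cong \three \times \two \times B$.

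Now approximate an arbitrary kernel $p : B \to \dist\two$ by kernels with a common denominator. Set $p_n^-(1|b) \coloneq \lfloor n\,p(1|b)\rfloor/n$ and $p_n^+(1|b) \coloneq \lceil n\,p(1|b)\rceil/n$. Both are measurable in $b$, satisfy $p_n^- \le p \le p_n^+$, and have denominator $n$ uniformly in $b$. Monotonicity together with \cref{prop:relative_rationalcase} then yields
\[
	(\overline{p_n^-} \otimes_\omega \overline{q})(\{1\}\times T\times S) \le \alpha(\overline{p},\overline{q})(\{1\}\times T\times S) \le (\overline{p_n^+} \otimes_\omega \overline{q})(\{1\}\times T\times S).
\]
By \eqref{eq:relativeproductmeasure}, using that $b \mapsto s(-|b)\otimes\delta_b$ is a regular conditional probability of $\overline{s}$, the outer terms equal $\int_S p_n^\pm(1|b)\,q(T|b)\,\omega(\dx b)$. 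Since $|p_n^\pm(1|b) - p(1|b)| \le 1/n$ uniformly, both bounds converge to $\int_S p(1|b)\,q(T|b)\,\omega(\dx b)$, which is $(\overline{p}\otimes_\omega\overline{q})(\{1\}\times T\times S)$.

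The same argument applies with $\{2\}$ in place of $\{1\}$, or one can use complements, since both measures have first marginal $\overline{p}$: for $\alpha$ this is naturality along the projection combined with the membership $\alpha(\overline{p},\overline{q}) \in \giry^\omega$; alternatively one uses the analogous monotone function. The sets $\{x\}\times\{y\}\times S$ form a $\pi$-system generating the $\sigma$-algebra of $\two\times\two\times B$, so the two measures agree. Finally, we remove the denominator assumption on $q$ by repeating the whole argument in the second variable, with $p$ now arbitrary, which is justified by the first step.

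The main obstacle is checking that every naturality square really lives in $\borelmeas/B$ and that the approximating kernels are measurable. Both are routine: the maps $\theta^\pm\times\id_B$ are maps over $B$, and floor and ceiling are Borel. No interchange of limit and integral beyond uniform convergence is needed.
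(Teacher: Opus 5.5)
Your proposal is correct and is essentially the paper's proof: monotonicity of $\phi_{T,S}$ via the witness $\eta$ and naturality along $\theta_B^\pm$, sandwiching by common-denominator kernels, the $\{2\}$ case, then repeating in the second variable; using denominator $n$ with uniform convergence instead of dyadic approximations with dominated convergence is an inessential variation. Drop the ``complements'' alternative, though. Recovering $\alpha(\overline{p},\overline{q})(\{2\}\times T\times S)$ that way needs the \emph{second} marginal, since the first marginal only yields the value at $T=\two$. Moreover, naturality along the projection only identifies that marginal with $\alpha(\omega,\overline{q})$, and membership in $\giry^\omega$ does not give $\alpha(\omega,\overline{q})=\overline{q}$; that equality would itself need the rational-plus-monotonicity argument. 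So use the analogous (nonincreasing) monotone function for $\{2\}$, as the paper does.
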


\begin{proof}
	We first assume that $q$ has a common denominator as in \cref{prop:relative_rationalcase}.
	For $T \subseteq \two$ and $S \in \Sigma_B$, define
	\[
		\phi_{T,S}(s) \coloneq
		\alpha(\overline{s},\overline{q})(\{1\} \times T \times S).
	\]
	If $p \leq p'$, let $\eta$ be the witness above and put
	$r \coloneq \alpha(\overline{\eta},\overline{q})$.
	Naturality under $\theta_B^\pm$ gives
	\begin{align*}
		\phi_{T,S}(p')-\phi_{T,S}(p)
		&= r(\{1,2\} \times T \times S)-r(\{1\} \times T \times S) \\
		&= r(\{2\} \times T \times S) \geq 0.
	\end{align*}
	Thus $\phi_{T,S}$ is monotonically nondecreasing.

	For an arbitrary kernel $p$, define measurable dyadic approximations by
	\[
		p_n^-(1|b) \coloneq 2^{-n}\lfloor 2^n p(1|b)\rfloor,
		\qquad
		p_n^+(1|b) \coloneq 2^{-n}\lceil 2^n p(1|b)\rceil,
	\]
	and $p_n^\pm(2|b) \coloneq 1-p_n^\pm(1|b)$.
	These satisfy $p_n^- \nearrow p$ and $p_n^+ \searrow p$ pointwise, and both have common denominator $2^n$.
	By monotonicity and \cref{prop:relative_rationalcase},
	\[
		\int_S p_n^-(1|b) \, q(T|b) \, \omega(\dx b)
		\leq \phi_{T,S}(p)
		\leq \int_S p_n^+(1|b) \, q(T|b)\,\omega(\dx b).
	\]
	Taking $n \to \infty$ and using dominated convergence gives
	\[
		\phi_{T,S}(p) = \int_S p(1|b) \, q(T|b) \, \omega(\dx b).
	\]
	We likewise get the same identity with $\{2\}$ in place of $\{1\}$.
	Since evaluation on sets of the form $\{i\} \times \{j\} \times S$ distinguishes measures on $\two \times \two \times B$,
	the claim follows for arbitrary $p$ whenever $q$ has a common denominator.

	Now fix arbitrary $p$ and $q$.
	Applying the same witness construction in the second variable shows that, for every $T \subseteq \two$ and $S \in \Sigma_B$, the map
	\begin{align*}
		\dist\two & \longrightarrow [0,1] \\
		t & \longmapsto \alpha(\overline{p},\overline{t})(T \times \{1\} \times S)
	\end{align*}
	is monotonically nondecreasing.
	Approximate $q$ from below and above by $q_n^-$ and $q_n^+$ as above.
	The case already proved and dominated convergence give
	\[
		\alpha(\overline{p},\overline{q})(T \times \{1\} \times S)
		= \int_S p(T|b) \, q(1|b) \, \omega(\dx b).
	\]
	Again we get the same identity with $\{2\}$ in place of $\{1\}$ as well, proving the claim.
\end{proof}

The final reduction again uses indicator functions, now paired with the structure maps to $B$.

\begin{proof}[Proof of \cref{theo:onenaturaltransformation_relative}]
	Given arbitrary $X,Y \in \borelmeas/B$,
	we show that $\alpha(p,q) = p \otimes_\omega q$ for all $p \in \giry^\omega X$ and $q \in \giry^\omega Y$.
	Choose regular conditional probabilities $p(-|b)$ and $q(-|b)$ using \cref{prop:regularconditionalprobability}.
	For $S \in \Sigma_X$ and $T \in \Sigma_Y$, define measurable maps $f_S : X \to \two \times B$ and $g_T : Y \to \two \times B$ over $B$ by
	\[
		f_S(x) \coloneq (\chi_S(x),\rho_X(x)),
		\qquad
		g_T(y) \coloneq (\chi_T(y),\rho_Y(y)),
	\]
	where $\chi_S$ and $\chi_T$ are the respective indicator functions.
	Also consider the Markov kernels $p_S,q_T : B \to \dist\two$ characterized by
	\[
		p_S(\{1\}|b) \coloneq p(S|b), \qquad q_T(\{1\}|b) \coloneq q(T|b).
	\]
	The disintegration identity~\eqref{eq:disintegration} gives
	$(f_S)_\ast p = \overline{p_S}$ and $(g_T)_\ast q = \overline{q_T}$.
	Naturality and \cref{lem:relative_bernoulliirrational} therefore imply
	\begin{align*}
		\alpha(p,q)\bigl((S \times T) \cap (X \times_B Y)\bigr)
		&= \bigl((f_S \times_B g_T)_\ast\alpha(p,q)\bigr)(\{1\} \times \{1\} \times B) \\[2pt]
		&= \alpha(\overline{p_S},\overline{q_T})(\{1\} \times \{1\} \times B) \\[2pt]
		&= \int_B p(S|b) \, q(T|b) \, \omega(\dx b) \\[2pt]
		&= (p \otimes_\omega q)\bigl((S \times T) \cap (X \times_B Y)\bigr).
	\end{align*}
	The restrictions of measurable rectangles form a generating $\pi$-system for $X \times_B Y$.
	Thus $\alpha(p,q) = p \otimes_\omega q$, as required.

	Again, the restriction of the whole proof to $\dist^\omega$ and $\distfin^\omega$ is straightforward.
\end{proof}

\printbibliography[heading=bibintoc]
\end{document}

\typeout{get arXiv to do 4 passes: Label(s) may have changed. Rerun}
%%% Local Variables:
%%% mode: LaTeX
%%% TeX-master: t
%%% End':